\definecolor{linkred}{rgb}{0.48,0.1,0.05}
\definecolor{linkblue}{RGB}{16, 78, 139}
	\titlespacing{\section}{0pt}{12pt}{0pt}
	\titlespacing{\subsection}{0pt}{6pt}{0pt}
\long\def\@footnotetext#1{%
\H@@footnotetext{%
\ifHy@nesting 
\hyper@@anchor{\@currentHref}{#1}%
\else 
\Hy@raisedlink{\hyper@@anchor{\@currentHref}{\relax}}#1%
\fi 
}}
\def\@footnotemark{%
\leavevmode 
\ifhmode\edef\@x@sf{\the\spacefactor}\nobreak\fi 
\H@refstepcounter{Hfootnote}%
\hyper@makecurrent{Hfootnote}%
\hyper@linkstart{link}{\@currentHref}%
\@makefnmark 
\hyper@linkend 
\ifhmode\spacefactor\@x@sf\fi 
\relax 
}%
\renewcommand*\@footnotemark{%
\leavevmode 
\ifhmode 
\edef\@x@sf{\the\spacefactor}%
\FN@mf@check 
\nobreak 
\fi 
\H@refstepcounter{Hfootnote}%
\hyper@makecurrent{Hfootnote}%
\hyper@linkstart{link}{\@currentHref}%
\@makefnmark 
\hyper@linkend 
\ifFN@pp@towrite 
\FN@pp@writetemp 
\FN@pp@towritefalse 
\fi 
\FN@mf@prepare 
\ifhmode\spacefactor\@x@sf\fi 
\relax%
}%
\theoremstyle{plain}
\newtheorem{theorem}{Theorem}[section]
\newtheorem{proposition}[theorem]{Proposition}
\newtheorem{lemma}[theorem]{Lemma}
\newtheorem{corollary}[theorem]{Corollary}
\newtheorem{conjecture}[theorem]{Conjecture}
\theoremstyle{definition}
\newtheorem{remark}[theorem]{Remark}
\newcommand{\D}{{\mathcal D}}
\newcommand{\A}{{\mathcal A}}
\newcommand{\GG}{{\mathcal G}}
\newcommand{\T}{{\mathcal T}}
\newcommand{\diam}{{\rm diam}}
\newcommand{\FF}{\mathcal F}
\newcommand{\mstar}{{\mbox{\raisebox{1px}{\scalebox{0.7}{$\star$}}}}}
\long\def\symbolfootnote[#1]#2{\begingroup%
\def\thefootnote{\fnsymbol{footnote}}\footnote[#1]{#2}\endgroup}
\def\blfootnote{\xdef\@thefnmark{}\@footnotetext}
\begin{document}

{\Large \bfseries \sc Once punctured disks, non-convex polygons, and pointihedra}

{\bfseries Hugo Parlier\symbolfootnote[1]{\normalsize Research supported by Swiss National Science Foundation grants PP00P2\textunderscore 128557 and PP00P2\textunderscore 153024
}, Lionel Pournin\symbolfootnote[2]{\normalsize Research funded by Ville de Paris {\'E}mergences project ``Combinatoire {\`a} Paris''.\\
{\em Key words:} flip-graphs, triangulations of surfaces, combinatorial moduli spaces
}}

\vspace{0.2cm}
{\em Abstract.}
We explore several families of flip-graphs, all related to polygons or punctured polygons. In particular, we consider the topological flip-graphs of once-punctured polygons which, in turn, contain all possible geometric flip-graphs of polygons with a marked point as embedded sub-graphs. Our main focus is on the geometric properties of these graphs and how they relate to one another. In particular, we show that the embeddings between them are strongly convex (or, said otherwise, totally geodesic). We also find bounds on the diameters of these graphs, sometimes using the strongly convex embeddings. Finally, we show how these graphs relate to different polytopes, namely type D associahedra and a family of secondary polytopes which we call pointihedra.

\section{Introduction}\label{sec:intro}

Triangulations of surfaces are naturally linked to different areas of mathematics including combinatorics, graph theory, geometric topology and anything having to do with surface geometry. They serve as combinatorial models for geometric structures on surfaces, encode surface homeomorphisms and are related to the geometric positioning of points on two dimensional structures. Surface triangulations can be related to one another by {\it flip} transformations that, in their simplest manifestation, consist in exchanging two arcs (see the next section for a formal definition). Given a set of triangulations, we are interested in the geometry of the associated \emph{flip-graph}. The vertices of this graph are the triangulations and its edges link two triangulations whenever they differ by a single flip.

A well studied example is the graph $\A_n$ of the triangulations of a convex Euclidean $n$-gon: this graph turns out to be the $1$-skeleton of a polytope, the associahedron, and its diameter is by now well understood \cite{Pournin2014,SleatorTarjanThurston1988}. Flip-graphs of topological surfaces have also been studied: in this case, triangulations are considered up to isotopy, and the underlying flip-graphs are in general infinite graphs. They are related to the large scale geometry of the self-homeomorphism groups of the surface \cite{DisarloParlier2014}. Somewhere in between the general topological case and the graph of the associahedron lie the triangulations of {\it filling surfaces} \cite{ParlierPournin2014,ParlierPournin2015}. These are orientable topological surfaces but, like in the case of a polygon, one varies the number of marked points along a privileged boundary curve. When the filling surface is a disk, this gives rise to the graph of the associahedron, because there is a one-to-one correspondence between the triangulations of a disk with $n$ marked points on the boundary, considered up to isotopy, and the Euclidean geodesic triangulations of a fixed convex $n$-gon. The only other instance that gives rise to a finite flip-graph is when the filling surface is a punctured disk; this particular graph will be studied in the sequel and for future reference we call it $\T_n$. There is a first natural relationship between these two graphs: considering triangulations that contain a fixed arc from the puncture (which we consider as an interior vertex) to an outer vertex, it is not difficult to see that there are multiple copies of $\A_{n+2}$ inside $\T_n$. Our first main result concerns the geometry of $\T_n$: we are able to identify its diameter exactly.

\begin{theorem}
For every positive integer $n$, $\diam(\T_n) = 2n-2$.
\end{theorem}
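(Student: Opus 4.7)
The plan is to establish matching upper and lower bounds of $2n-2$ on $\diam(\T_n)$.

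For the upper bound, the strategy is to exploit the embeddings of $\A_{n+2}$ into $\T_n$ noted in the introduction: fixing an arc from the puncture to a boundary vertex $v$ gives a subgraph of $\T_n$ (the triangulations containing this arc) that is isomorphic to $\A_{n+2}$. Since $\diam(\A_{n+2}) \leq 2(n+2)-10 = 2n-6$ for sufficiently large $n$ by the classical bound of Pournin \cite{Pournin2014}, any two triangulations of $\T_n$ sharing such a puncture-arc lie at distance at most $2n-6$ in $\T_n$. The plan is then to show that any pair $T,T'$ can be brought, using at most two flips each, into a configuration where they share a common puncture-arc; this gives $d(T,T') \leq 4 + (2n-6) = 2n-2$. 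The bridging step is made possible by the fact that the puncture has degree at least $3$ in every triangulation, which ensures that there are several candidate arcs to align.

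For the lower bound, I would exhibit an explicit pair of triangulations at distance exactly $2n-2$. A natural candidate is a pair of ``zig-zag'' triangulations with the puncture attached asymmetrically: close to one boundary extremity in $T$ and close to the opposite one in $T^{\star}$. To certify the distance from below, the plan is to construct a combinatorial potential $\phi : \T_n \to \Z$ that changes by at most one under each flip and satisfies $|\phi(T) - \phi(T^{\star})| = 2n-2$.

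The main obstacle is expected to be the construction of this potential $\phi$. The invariant has to be flip-compatible (changing by at most one per flip across all of $\T_n$, including triangulations with small puncture-degree and those close to a star) while also being sharp enough to force the full distance; candidates drawn from rotation/winding numbers around the puncture or from counts of distinguished arc types each look only partially successful and need to be combined carefully. A secondary technical point is the bridging lemma in the upper bound: verifying that $T$ and $T'$ can always be brought into a common embedded copy of $\A_{n+2}$ in at most four flips, particularly when their puncture-arc sets are disjoint (which can only happen for $n \geq 6$), will require a case analysis based on the cyclic arrangement of arcs at the puncture.
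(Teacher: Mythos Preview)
Your upper-bound plan contains a concrete error: in $\T_n$ the puncture does \emph{not} have degree at least $3$. That bound holds for the Euclidean flip-graph $\FF(P^\mstar)$, where every triangle is a genuine Euclidean triangle, but $\T_n$ is the topological flip-graph of the once-punctured disk, where self-folded triangles occur. In particular the very triangulations you would need as extremal examples (zig-zags with the puncture attached at one end by a single arc and enclosed by a loop) have puncture-degree exactly $1$. With the degree-$3$ claim gone, your ``two flips each'' bridging step has no justification, and the Pournin bound $\diam(\A_{n+2})\le 2n-6$ you invoke only holds for $n\ge 11$ anyway. The paper's upper bound is in fact much simpler than your route and avoids these issues: every triangulation of $\Delta_n^\mstar$ has at least one interior arc at the puncture, and each flip can add one more, so any $T$ reaches the ``star'' triangulation (all $n$ interior arcs incident to the puncture) in at most $n-1$ flips; hence any two triangulations are at most $2n-2$ apart.

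For the lower bound your instinct about zig-zag pairs is exactly the right one, but the potential-function strategy is, as you acknowledge, the whole difficulty, and you have not proposed a candidate $\phi$. The paper does not go this way at all. Instead it proves the distance between the two zig-zag triangulations $A_n^-$ and $A_n^+$ is at least $2n-2$ by induction on $n$, via a \emph{vertex deletion} argument: deleting the last boundary vertex $a_{n+1}$ sends $A_{n+1}^\pm$ to (relabelled) $A_n^\pm$, and along any geodesic it suppresses exactly those flips that touch the triangle incident to the boundary arc $a_{n+1}a_1$. A short local analysis shows at least two such flips must occur (a single one would introduce an arc crossing two existing arcs), so the geodesic length drops by at least $2$ under deletion, and the induction closes. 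This deletion/contraction technique is the workhorse of \cite{ParlierPournin2014,ParlierPournin2015,Pournin2014} and is far more tractable here than designing a global Lipschitz potential on $\T_n$.
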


The methods we use to obtain the lower bound on this diameter are similar to those in \cite{ParlierPournin2014} and \cite{ ParlierPournin2015}, but the specific nature of the once-punctured disk provides a particularly nice illustration of them. It is interesting to note that the graph $\mathcal{D}_n$ of the related type D associahedron admits $\T_n$ as a proper subgraph. As shown in \cite{CeballosPilaud2016}, the diameter of $\mathcal{D}_n$ is also $2n-2$. Although neither of these two diameter results imply the other, the similarity is striking. In particular, the  diametrically opposite vertices of $\mathcal{D}_n$ exhibited in \cite{CeballosPilaud2016} do not both belong to $\T_n$ so we need to find new pairs of vertices to prove our lower bounds on the diameter of $\T_n$. 

Another natural variant on $\A_n$ is to relax the convexity condition of the underlying polygon: topologically nothing has changed but here we are looking for {\it geometric} triangulations of the polygon. This gives rise to natural connected subgraphs of $\A_n$. Note that their connectedness is not a priori obvious \cite{HurtadoNoyUrrutia1999}. Similarly, one can take a Euclidean convex $n$-gon $P$ and place a marked point (referred to as a puncture) in its interior asking again for the geometric triangulations of the resulting punctured polygon $P^\mstar$. This time, the obtained flip-graph $\mathcal{F}(P^\mstar)$ is a natural subgraph of $\T_n$. In both graphs, there are copies of flip-graphs of polygons. In a multitude of different variants, we prove {\it convexity} results about how these subgraphs lie in the larger graphs.  We show that certain embedded subgraphs are strongly convex by which we mean that a geodesic path between any two vertices of the subgraph is entirely contained in the subgraph. This allows us to find lower bounds on diameter of the different graphs. As an example of our results we show the following.

\begin{theorem}
Let $P$ be a convex $n$-gon. For any placement of the interior marked point within $P$, the resulting punctured polygon has a flip-graph of diameter at most $2n-6$. In addition, one can place the interior marked point within $P$ so that this punctured polygon has a flip-graph of diameter at least $2n-8$. 
\end{theorem}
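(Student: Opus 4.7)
The plan is to centre the argument at the \emph{puncture-fan} $F \in \mathcal{F}(P^{\mstar})$, the triangulation whose interior arcs are the straight segments from $p$ to every vertex of $P$; this lies in $\mathcal{F}(P^{\mstar})$ because $P$ is convex and $p$ is interior. Showing that every $T \in \mathcal{F}(P^{\mstar})$ is within $n - 3$ flips of $F$ then yields $\diam \mathcal{F}(P^{\mstar}) \leq 2n - 6$ by the triangle inequality. I would prove this by induction on $n - \deg_T p$, using that $\deg_T p \geq 3$ since the link of an interior vertex of a straight-line triangulation is a simple cycle. When $\deg_T p < n$, one edge of the $p$-star in $T$ is a chord $e$ between two boundary vertices, shared between a star triangle $p v_i v_j$ and an ear triangle $v_i v_j w$. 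The chord $e$ separates $P$ into two parts, one containing $p$ and the star triangle, the other containing $w$ and the ear triangle; consequently the straight segment $pw$ (which lies in $P$ by convexity) crosses $e$ and is contained in the union $p v_i v_j \cup v_i v_j w$. Hence the flip $e \mapsto pw$ is realized in $\mathcal{F}(P^{\mstar})$ and raises $\deg p$ by one, which closes the induction.

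\textbf{Lower bound.} For the lower bound the plan is to place $p$ so that a strongly convex subgraph of $\mathcal{F}(P^{\mstar})$ --- the triangulations containing a carefully chosen forced arc $\alpha$ --- is isomorphic to the associahedron graph $\A_{n+1}$. By the strong-convexity theorems established earlier in the paper, any geodesic between two such triangulations of $\mathcal{F}(P^{\mstar})$ stays inside the subgraph, so distances in the subgraph coincide with distances in $\mathcal{F}(P^{\mstar})$. Consequently $\diam \mathcal{F}(P^{\mstar}) \geq \diam \A_{n+1} = 2(n+1) - 10 = 2n - 8$ for $n \geq 12$ by the Sleator-Tarjan-Thurston / Pournin diameter formula; the remaining small-$n$ cases can be handled directly.

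\textbf{Main obstacle.} The delicate point is engineering the placement of $p$ so that every triangulation of the $(n+1)$-gon obtained by cutting $P^{\mstar}$ along $\alpha$ is realizable by straight segments: a naive cut along a single puncture-arc always produces a reflex angle at $p$ in the larger sub-polygon, which in general forbids some of the diagonals needed for the full Catalan family. Choosing $\alpha$ and the placement of $p$ so that this obstruction disappears --- and then checking that the resulting embedding is strongly convex, rather than merely an isometric inclusion on vertices --- is where the paper's convexity infrastructure does the real work.
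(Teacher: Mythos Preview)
Your upper bound argument is correct and is essentially the paper's proof of Lemma~\ref{Plemma.5.4}: reach the puncture-fan in at most $n-3$ flips using $\deg_T p\geq3$, then apply the triangle inequality.

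Your lower bound strategy, however, has a genuine gap. You want to place $p$ so that the subgraph of triangulations containing a fixed arc (or multi-arc) $\alpha$ through $p$ is isomorphic to the full associahedron $\A_{n+1}$. This cannot be achieved. Cutting $P^\mstar$ along a single arc from $p$ to a boundary vertex yields, topologically, an $(n+2)$-gon (not an $(n+1)$-gon) in which the cut vertex is doubled; geometrically this is degenerate. Cutting along two arcs from $p$ to adjacent boundary vertices $v_i,v_{i+1}$ does give an $(n+1)$-gon $Q$ with vertex set $\{v_{i+1},\dots,v_i,p\}$, but the interior angle of $Q$ at $p$ is $2\pi-\angle v_ipv_{i+1}>\pi$, so $Q$ is genuinely non-convex: the diagonal $v_iv_{i+1}$ of $Q$ is the boundary edge of $P$ and lies outside $Q$. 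Hence at least one diagonal is always forbidden, and $\FF(Q)$ is a proper subgraph of $\A_{n+1}$. The obstruction you flag in your ``Main obstacle'' paragraph does not disappear for any placement of $p$ or choice of $\alpha$, so invoking $\diam\A_{n+1}=2n-8$ is unjustified.

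That said, your idea can be salvaged, and the resulting argument is different from the paper's. If $p$ is placed close enough to a boundary edge $v_1v_2$, then both $pv_1$ and $pv_2$ lie in every triangulation of $P^\mstar$, so $\FF(P^\mstar)\cong\FF(Q)$ for the $(n+1)$-gon $Q$ above, and $Q$ has \emph{exactly one} forbidden diagonal. Proposition~\ref{Propos.nc} then gives $\diam\FF(Q)\geq2(n+1)-10=2n-8$ directly. The paper instead proves Theorem~\ref{Ptheorem.5.5} by a contraction argument: it places $p$ in the common interior of two specific triangles $u\in U$, $v\in V$ (with $U,V$ at distance $\geq2n-10$ in $\A_n$, sharing a vertex $a$), and contracts any geodesic from $U'$ to $V'$ in $\FF(P^\mstar)$ along the arc $ap$; since the two triangles incident to $ap$ must each change at least once, two flips are lost in the contraction, yielding $k\geq(2n-10)+2$. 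Your corrected route is arguably more direct but leans on Proposition~\ref{Propos.nc}, which itself imports the Pournin triangulations, so the external input is the same.
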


The last flip-graphs we consider are slightly different from the above graphs; they are obtained as the $1$-skeleton of some secondary polytopes \cite{GelfandKapranovZelevinsky1991}, just like $\A_n$ can be obtained as the $1$-skeleton of an associahedron. These graphs are closely related to the ones described above. Let $P$ be a convex Euclidean $n$-gon and $P^\mstar$ the same polygon with an interior marked point. The graph we consider contains both the flip-graph of $P$ (which is isomorphic to $\A_n$) and the flip-graph of $P^\mstar$: in particular its vertex set is the union of the two vertex sets. In addition there are edges corresponding to extra flip moves between triangulations of $P$ and triangulations of $P^\mstar$ that consist in inserting the puncture inside a triangulation (a proper definition is given in the next section). The resulting flip-graph, denoted by ${\overline{\mathcal F}}(P^{\mstar})$ is the graph of a polytope which we call a pointihedron. Our main results are, again, bounds on the diameter of this graph and convexity results on how $\A_n$ and $\mathcal{F}(P^{\mstar})$ lie within it at least for certain placements of the puncture. For instance:

\begin{theorem}\label{thm:SCP}
Let $P$ be a convex $n$-gon. For certain placements of the puncture, the natural embedding $\A_n \hookrightarrow {\overline{\mathcal F}}(P^{\mstar})$ is strongly convex.
\end{theorem}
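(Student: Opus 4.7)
The plan is to construct a distance-non-increasing retraction $\pi$ from $\overline{\mathcal F}(P^\mstar)$ to $\A_n$ that collapses the insertion/deletion edges linking $\A_n$ to $\mathcal F(P^\mstar)$; this will force geodesics between vertices of $\A_n$ to stay inside $\A_n$.

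Place the puncture $p$ very close to a vertex $v_1$ of $P$, generically inside $P$. For any $T^\mstar \in \mathcal F(P^\mstar)$, the link of $p$ in $T^\mstar$ is a polygon $L$ with vertices in $\{v_1,\dots,v_n\}$ whose interior contains $p$. Since the vertices of the convex polygon $P$ are in convex position, $v_1$ lies outside the convex hull of $\{v_2,\dots,v_n\}$, so for $p$ close enough to $v_1$, any such $L$ must have $v_1$ as a vertex. Define $\pi$ to be the identity on $\A_n$, and, for $T^\mstar$ using $p$, let $\pi(T^\mstar)$ be obtained from $T^\mstar$ by removing $p$ together with its incident edges and retriangulating the resulting hole $L$ as the fan based at $v_1$.

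The main claim is that $\pi$ is $1$-Lipschitz and contracts every insertion/deletion edge. For an insertion step $T \leftrightarrow T^\mstar$, the link $L$ is the triangle $(v_1,v_a,v_b)$ of $T$ containing $p$, and its fan from $v_1$ is the triangle itself; hence $\pi(T^\mstar)=T=\pi(T)$. A flip of a diagonal of $T^\mstar$ not incident to $p$ and not on the boundary of the star of $p$ leaves $L$ unchanged and $\pi$ transports the flip. A flip of an edge $p v_{i_j}$ deletes $v_{i_j}$ from $L$ and produces a triangle $(v_{i_{j-1}},v_{i_j},v_{i_{j+1}})$ outside the new star; a direct comparison shows that $\pi(T^\mstar_1)$ and $\pi(T^\mstar_2)$ then differ by exactly the flip $v_1 v_{i_j} \leftrightarrow v_{i_{j-1}} v_{i_{j+1}}$ in $\A_n$, which is valid because all four points are vertices of the convex polygon $P$. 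A flip of a boundary edge of $L$ adjoins some vertex $v_c$ to $L$; the images under $\pi$ either coincide (when $v_1$ is an endpoint of the flipped edge) or differ by a single flip of the form $v_{i_r}v_{i_{r+1}} \leftrightarrow v_1 v_c$.

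Granted this, suppose $T_0,T_1,\dots,T_m$ is a geodesic in the pointihedron with $T_0,T_m\in\A_n$ and at least one intermediate $T_i\notin\A_n$. The walk $\pi(T_0),\dots,\pi(T_m)$ in $\A_n$ has length at most $m$, with at least two steps contributing $0$ (the first insertion and the last deletion). This yields a path in $\A_n$ from $T_0$ to $T_m$ of length at most $m-2$. Since every $\A_n$-path is a pointihedron-path, we obtain $d_{\overline{\mathcal F}(P^\mstar)}(T_0,T_m)\le m-2$, contradicting the geodesic assumption. Hence every geodesic between two vertices of $\A_n$ stays inside $\A_n$, which is exactly strong convexity. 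The principal technical obstacle is the case analysis establishing that $\pi$ is $1$-Lipschitz, especially the subcases in which the flip modifies the link $L$; here one must carefully track the local configuration around $L$ and exploit convexity of $P$ to verify that the induced flips in $\A_n$ are admissible. The placement of $p$ near $v_1$ is essential, both to make the fan-from-$v_1$ construction well-defined (by guaranteeing $v_1\in L$) and to render the insertion/deletion edges free in the detour-counting argument.
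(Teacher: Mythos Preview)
Your proof is correct and is essentially the paper's own argument, phrased differently. Placing $p$ generically near a vertex $v_1$ puts it inside the ear at $v_1$, which is the paper's ``first case'' (puncture inside the triangle $uvw$ formed by three consecutive vertices); your fan-from-$v_1$ retraction $\pi$ coincides exactly with the paper's \emph{contraction along the arc $\alpha=pv_1$} (sliding $p$ onto $v_1$ turns every edge $pv_{i_j}$ into $v_1v_{i_j}$, which is precisely the fan in $L$). Your Cases~a,~b,~c are a more explicit version of the paper's one-line observation that contracted triangulations are ``either identical or related by a flip'', and your endgame (two collapsed steps force length $\le m-2$) is a mild variant of the paper's (any collapsed step already contradicts minimality).

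Two small points worth tightening. In Case~b you should note the degenerate subcases $v_{i_{j-1}}=v_1$ or $v_{i_{j+1}}=v_1$, where the images actually coincide rather than differ by a flip (this only helps the $1$-Lipschitz claim, but your text asserts a genuine flip there). You should also remark that the edge $pv_1$ is never flippable for your placement, so the subcase $v_{i_j}=v_1$ does not arise. Finally, the paper goes a bit further than you do: it allows the puncture anywhere in the intersection of two boundary quadrilaterals sharing three vertices, which properly contains the ear at $v_1$; handling the portion outside the ear requires an extra twist (pre-flipping the unique obstructing diagonal $\beta$ before contracting), which your version does not need since you only claim the result for \emph{some} placement.
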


We conjecture that this theorem remains true for any placement of the puncture. However, the methods we use to prove it do not seem to extendable to the general case. 
Finally, it is shown that, for some placement of the puncture, the natural embedding $\mathcal{F}(P^{\mstar})\hookrightarrow {\overline{\mathcal F}}(P^{\mstar})$ is not convex.

\textbf{Acknowledgements.} We are grateful for conversations with Delphine Milenovic who studied a particular variation of the geometric punctured flip-graph in her master's thesis. In particular she coined the term ``pointihedra''. Part of this work was done while the first author was visiting the LIPN at the University Paris 13 and the authors thank them for their support.  

\section{Preliminaries}

We begin the section by defining the topological flip-graphs of a disk and a once punctured disk and discuss some of their geometric interpretations. We then move on to defining geometric flip-graphs, some of which are intrinsically geometric as, for instance, the flip-graphs of non-convex or punctured Euclidean polygons. The last flip-graph we describe in this section, related to the geometric ones, turns out to be the $1$-skeleton of a polytope, the \emph{pointihedron}. Along the way, we explain where the simplicial embeddings come from and we conclude the section with some general remarks on simplicial embeddings between flip-graphs.

\subsection{Topological flip-graphs}

Let $\Delta$ be the disk and $\Delta^\mstar$ the punctured disk where the puncture is viewed as a marked point. If $\Sigma$ is either of these surfaces, we denote by $\Sigma_n$ the surface obtained from it by placing $n$ marked points on the boundary curve. Two arcs between marked points of $\Sigma_n$ are non-crossing when they have a common endpoint or disjoint interiors. A triangulation of $\Sigma_n$ is a maximal set of such non-crossing arcs. We denote by $\FF(\Sigma_n)$ the flip-graph of $\Sigma_n$. The vertices of this graph are the isotopy classes of the triangulations of $\Sigma_n$ and its edges connect triangulations that differ by a single arc. Equivalently, two triangulations are connected by an edge of $\FF(\Sigma_n)$ when they can be obtained from one another by a single {\it flip} operation. This operation consists in glueing two neighboring triangles into a quadrilateral, and then split it again into two triangles in the unique other possible way, or equivalently, in exchanging the two diagonals of this quadrilateral.

We denote
$$
\A_n=\FF(\Delta_n)
$$
as this graph is isomorphic to the $1$-skeleton of the associahedron \cite{Lee1989}. In particular, note that in this case $\FF(\Delta_n)$ can be given a geometric interpretation because its vertices are in one-to-one correspondence with the Euclidean triangulations of any convex $n$-gon.

For future reference we denote
$$
\T_n=\FF(\Delta^{\mstar}_n)
$$
This graph also has a geometric interpretation, but in terms of hyperbolic geometry. Take a hyperbolic ideal polygon $H_n$ with $n$ ideal points and with an interior cusp. The graph $\T_n$ then can be seen as the set of geodesic ideal triangulations of $H_n$ with the flip relations simply because every isotopy class of a triangulation is realized by a unique ideal geodesic triangulation. Although we will not make specific use of this model for the flip-graph, it is always good to keep in mind that geometric interpretations aren't always Euclidean. However, in the remainder of the article, we will use the word \emph{geometric} for triangulations of Euclidean objects.

We close this discussion on $\T_n$ by observing that it is closely related to the graph of the type D associahedron \cite{CeballosPilaud2016}, which we denote by $\D_n$. We refer to \cite{CeballosPilaud2016} for a precise definition of $\D_n$, but briefly said, although the vertices of these graphs consist in collections of arcs of the same cardinality, $\D_n$ differs from $\T_n$ in that multiple loops around the puncture are allowed to coexist. There is also a flip operation allowing to pass from one such loop to another. It is interesting to note that $\T_n$ is a proper subgraph of $\D_n$, a polytopal graph, and as we will discover in Section \ref{sec:punctureddisk}, they both have the same diameter.  

\subsection{Geometric flip-graphs}

Let $P$ be a possibly non-convex simple Euclidean $n$-gon. As in the topological case, a triangulation of $P$ is a maximal set of pairwise non-crossing arcs, with the only difference that the arcs we choose from are the Euclidean line segments contained in $P$ and connecting two vertices of $P$. As mentioned above, the triangulations of $\Delta_n$ can be viewed as the triangulations of any convex $n$-gon. This property fails with non-convex polygons: some segment between two vertices of such a polygon $P$ lies outside of it and thus the triangulations of $\Delta_n$ containing the corresponding arc cannot be realized as triangulations of $P$. We denote by $\FF(P)$ the geometric flip-graph of $P$, whose vertices are the triangulations of $P$ and whose edges connect any two triangulations that differ by a single arc. Note here that flips, as operations, consist in exchanging the diagonals of a convex quadrilateral. Convexity makes it sure that the exchanged arcs are both contained in the polygon.

As shown in \cite{HurtadoNoyUrrutia1999}, $\FF(P)$ is connected. Note that $\FF(P)$ is, in general, a subgraph of $\A_n$. In other terms, if $P$ is a simple Euclidean $n$-gon, we have a natural map 
$$
\FF(P) \hookrightarrow \A_n
$$
where $\hookrightarrow$ denotes a simplicial injection.

We now turn to the case of punctured polygons. We consider a fixed convex Euclidean $n$-gon $P$ and denote by $P^\mstar$ a geometric ``punctured" version of it. So $P^\mstar$ is really a choice of a $(n+1)$-th marked point $p$ in the interior of $P$. A triangulation of $P^\mstar$ is, again, a maximal set of pairwise non-crossing arcs. In this case, however, the arcs we choose from are the Euclidean line segments whose two endpoints are either two vertices of $P$, or a vertex of $P$ and the puncture $p$. Note that, by the maximality of a triangulation as a set of edges, all marked points including $p$ are vertices of it. We denote by $\FF(P^\mstar)$ the corresponding geometric flip-graph, that can be defined exactly as in the above two cases. In particular, as an operation, a flip still exchanges the diagonals of convex quadrilaterals.

Triangulations of $\Delta^\mstar_n$ cannot all be realized as triangulations of $P^\mstar$ either. In addition, $\FF(P^\mstar)$, as a subgraph of $\T_n$, is always proper: it is impossible to choose $P^\mstar$ in such a way that every topological arc can be realized geometrically. To see this, note that there are two topological arcs between any pair of outer marked points on $\Delta^\mstar$, depending on which side of the puncture they lie (see Fig. \ref{Pfigure.2.1}). 
\begin{figure}
\begin{centering}
\includegraphics{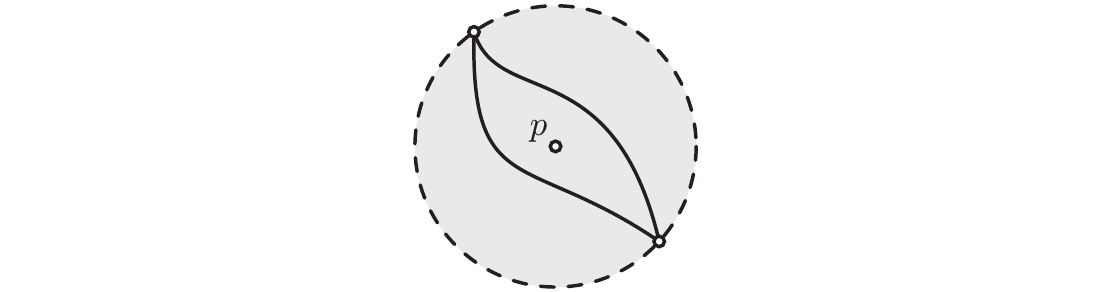}
\caption{Two non-isotopic arcs with the same vertex set.}\label{Pfigure.2.1}
\end{centering}
\end{figure}
However, at most one of them can be realized as an arc of a triangulation of $P^\mstar$ .

In summary, for every choice of $P^\mstar$, we get a simplicial embedding
$$\FF(P^\mstar) \hookrightarrow \T_n$$
which is never onto. 

\subsection{Pointihedra}

The last flip-graph we consider in this article can be obtained as the $1$-skeleton of a polytope, just as $\FF(\Delta_n)$ can be obtained as the $1$-skeleton of an associahedron, and for this reason, it is of special interest.

Let $P$ be a convex polygon and $P^\mstar$ the same polygon with a (fixed) marked point in its interior, as above. We need to define flips that insert the puncture inside a triangulation of $P$ (or, in an equivalent way, delete the puncture from a triangulation of $P^\mstar$). Consider a triangulation $T$ of $P$. If the puncture lies in the interior of a triangle $t$ of $T$, then one can flip the puncture into $T$ by introducing the three edges that connect it to the vertices of $t$, resulting in a triangulation of $P^\mstar$. If the puncture lies on an edge $\varepsilon$ of $T$, then call $q$ the quadrilateral obtained by glueing the two triangles of $T$ incident to $\varepsilon$. The puncture can be flipped into $T$ by first removing $\varepsilon$ from $T$ and then re-triangulating $q$ with the four edges that connect the puncture to its vertices, which also results in a triangulation of $P^\mstar$. We denote by ${\overline{\mathcal F}}(P^{\mstar})$ the graph obtained by first taking the disjoint union of $\FF(P)$ and $\FF(P^\mstar)$, and by then adding an edge between a triangulation of $P$ and a triangulation of $P^\mstar$ whenever the latter can be obtained by flipping the puncture into the former.

As mentioned above, there are polytopes whose graph is isomorphic to ${\overline{\mathcal F}}(P^{\mstar})$. We will call any such polytope a \emph{pointihedron}. For instance, consider the set of points made up of $p$ together with all the vertices of $P$. The secondary polytope of this set is a pointihedron, just as the secondary polytope of the vertex set of $P$ is an associahedron. We refer the interested reader to \cite{GelfandKapranovZelevinsky1991,DeLoeraRambauSantos2010} for more details about secondary polytopes. Note that, while we have two natural embeddings
$$
\FF(P^\mstar) \hookrightarrow {\overline{\mathcal F}}(P^{\mstar})\mbox{,}
$$
and
$$
\FF(P) \hookrightarrow {\overline{\mathcal F}}(P^{\mstar})\mbox{,}
$$
only $\FF(P)$ will be shown to be a geodesic subgraph of ${\overline{\mathcal F}}(P^{\mstar})$ for some placements of the puncture, or in the terminology we now introduce, only the latter embedding is strongly convex for such placements. 

If $Y$ is a graph, we say that a subgraph $X$ of $Y$ is {\it strongly convex in $Y$} if all the geodesics between any two vertices of $X$ lie entirely in $X$. Similarly an embedding $X \hookrightarrow Y$ between graphs is said to be strongly convex if the image of $X$ is strongly convex inside $Y$. In particular this implies that the intrinsic and extrinsic geometries of $X$ coincide. One of the key ingredients we shall use in the sequel is the strong convexity of many of the embeddings we introduced above. 

\subsection{Remarks on embeddings between flip-graphs}\label{Psection.2.4}

Although we will not make explicit use of this here, it is interesting to note that embeddings between topological flip-graphs are pretty well understood. The most natural way of getting an embedding is by taking all the triangulations of a surface that contain a fixed set of arcs. This provides a copy of the flip-graph of the surface cut along this set of arcs. Except for some low complexity situations, all flip-graph embeddings arise in this way \cite{AramayonaKoberdaParlier2014}. This works for both finite and infinite flip-graphs and tells, for example, that whenever there is a copy of the flip-graph of, say a $17$-gon inside the flip-graph of a $31$-gon, there is a collection of $14$ arcs that belong to each and every triangulation in this copy. 

In \cite{DisarloParlier2014}, it is shown that, in full generality, these natural copies of flip-graphs are strongly convex. Specifically let $\mu$ be a multi-arc on a surface $\Gamma$, $\FF(\Gamma)$ the flip-graph of $\Gamma$ and $\FF(\Gamma\setminus \mu)$ is the flip-graph of $\Gamma\setminus \mu$. The natural simplicial embedding 
$$
\FF(\Gamma\setminus \mu) \hookrightarrow\FF(\Gamma)\mbox{,}
$$
corresponding to looking at all triangulations containing $\mu$, is strongly convex.

\section{The punctured disk}\label{sec:punctureddisk}

We obtain the diameter of $\FF(\Delta^\mstar_n)$ in this section using techniques already used in \cite{ParlierPournin2014,ParlierPournin2015,Pournin2014,SleatorTarjanThurston1988}. We will not need to use these techniques in all their generality here and will only describe the needed notions along the way. The interested reader is referred to the above mentioned articles for details about them. The claimed result will be obtained by giving matching lower and upper bounds on $\diam(\FF(\Delta^\mstar_n))$. The proof of the upper bound is straightforward and we give it first.

\begin{lemma}\label{Plemma.4.1}
Any two triangulations of the once punctured disk $\Delta^\mstar_n$ can be transformed into one another using at most $2n-2$ flips.
\end{lemma}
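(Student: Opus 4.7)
My plan is to fix a convenient target triangulation $T^\mstar$ of $\Delta^\mstar_n$ and to prove that every triangulation $T$ satisfies $d(T,T^\mstar) \leq n-1$; the lemma then follows immediately from the triangle inequality
\[
d(T_1,T_2) \leq d(T_1,T^\mstar) + d(T^\mstar,T_2) \leq 2(n-1) = 2n-2.
\]
The natural choice is the \emph{puncture star} $T^\mstar$, whose $n$ interior arcs are the arcs $pv_0, pv_1, \ldots, pv_{n-1}$ joining the puncture $p$ to each of the boundary marked points $v_0, \ldots, v_{n-1}$.

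To estimate $d(T,T^\mstar)$ I would exploit the decomposition of $\Delta^\mstar_n$ cut out by the arcs of $T$ incident to $p$. Set $k = \deg_T(p)$; these $k$ arcs split the surface into $k$ topological polygons $R_1, \ldots, R_k$, each having $p$ as one vertex and a consecutive block of boundary vertices as the rest. A direct count gives $\sum_{j=1}^{k} m_j = n + 2k$, where $m_j$ is the number of vertices of $R_j$. By construction, no interior arc of $T$ inside $R_j$ is incident to $p$, so the restriction of $T$ to $R_j$ is a triangulation of a convex $m_j$-gon in which $p$ is incident to a single triangle, whose opposite side is an interior diagonal of $R_j$.

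I would then invoke the classical polygon estimate: any triangulation of a convex $m$-gon can be flipped to the fan at a prescribed vertex in at most $m-3$ flips. This is proved by an easy induction on $m$: flip the diagonal of the unique triangle at the centre to introduce a new arc from the centre, then recurse on the two strictly smaller sub-polygons produced, inside each of which the centre still has only one incident triangle. Applied inside each $R_j$ and lifted to the ambient surface (these local flips leave every other arc of $T$ intact, and in particular preserve the $p$-arcs of $T$), this produces a flip sequence from $T$ to $T^\mstar$ of length
\[
\sum_{j=1}^{k}(m_j - 3) = (n + 2k) - 3k = n - k \leq n - 1,
\]
using $k \geq 1$ because $p$ must be a vertex of any triangulation. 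The main obstacle is really just formulating the per-region polygon estimate cleanly; everything else is accounting.
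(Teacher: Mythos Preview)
Your proof is correct and follows the same strategy as the paper's: flip any triangulation to the puncture star in at most $n-1$ steps and then apply the triangle inequality. The paper argues this in one line (whenever the puncture is not incident to all interior arcs, a single flip can introduce one more such arc, and one starts with at least one), whereas you decompose along the existing $p$-arcs and invoke the polygon fan bound in each piece; the resulting count $n-k\le n-1$ is identical.
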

\begin{proof}
Consider a triangulation $T$ of $\Delta^\mstar_n$, and recall that it has exactly $n$ interior arcs. If the puncture is not incident to all the interior arcs of $T$, it is always possible to introduce one additional such arc by performing a single flip. Moreover, the puncture is already incident to at least one interior arc of $T$. Hence, it is possible to transform $T$ into the triangulation of $\Delta^\mstar_n$ whose interior arcs are all incident to the puncture by at most $n-1$ flips. Therefore, any two triangulations of $\Delta^\mstar_n$ can be transformed into one another using at most $2n-2$ flips, as claimed².
\end{proof}

The lower bound will be obtained by showing that the two triangulations $A_n^-$ and $A_n^+$ depicted in Fig. \ref{Pfigure.4.1} have distance at least $2n-2$ in $\FF(\Delta^\mstar_n)$. Most of the interior arcs in these triangulations are arranged as a zigzag. The puncture is placed at one end of the zigzag in $A_n^-$ and at the other end in $A_n^+$. It is linked to the boundary by a single interior edge and surrounded by a loop twice incident to the boundary. Note that, when $n$ is equal to $1$, $2$, or $3$, the depiction in Fig. \ref{Pfigure.4.1} may be ambiguous.
\begin{figure}
\begin{centering}
\includegraphics{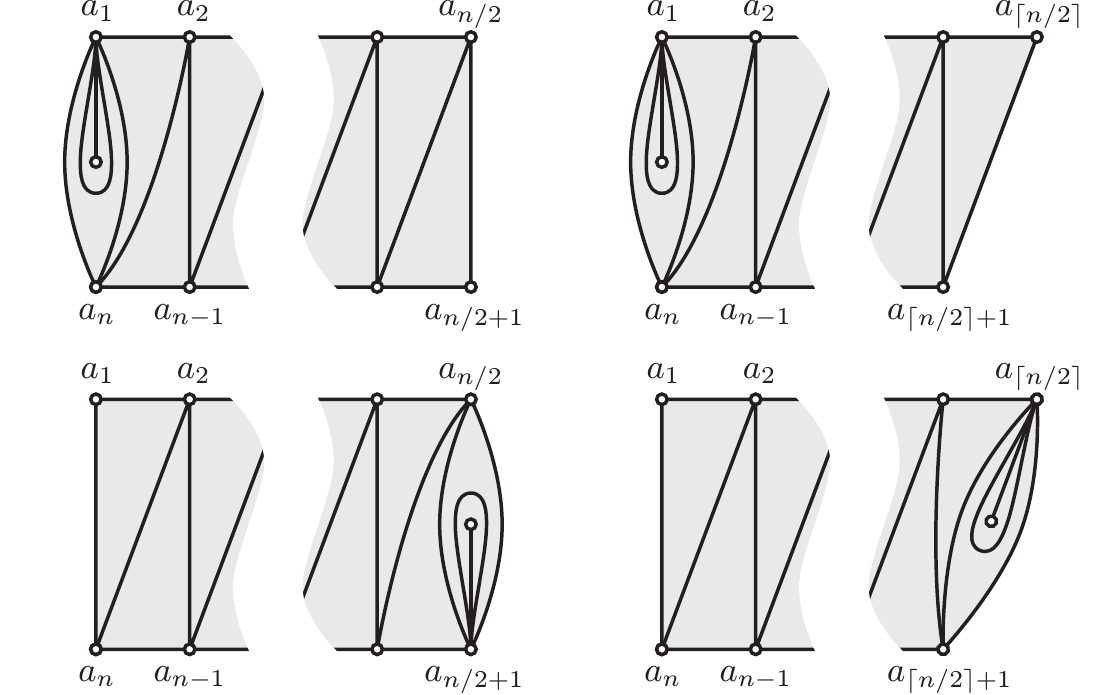}
\caption{The triangulations $A_n^-$ (top row) and $A_n^+$ (bottom row) of the once punctured disk, depicted when $n$ is even (left) and odd (right).}\label{Pfigure.4.1}
\end{centering}
\end{figure}
For this reason, $A_n^-$ and $A_n^+$ are shown in Fig. \ref{Pfigure.4.2} when $1\leq{n}\leq3$.

\begin{lemma}\label{Plemma.4.2}
At least $2n-2$ flips are required to transform $A_n^-$ into $A_n^+$.
\end{lemma}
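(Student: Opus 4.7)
The plan is to obtain the lower bound by a potential-function (or weighting) argument in the style of \cite{ParlierPournin2014,ParlierPournin2015,SleatorTarjanThurston1988}: define a quantity $\Phi(T)$ on triangulations of $\Delta_n^\mstar$ such that $\Phi(A_n^+)-\Phi(A_n^-)=2n-2$, and such that $|\Phi(T)-\Phi(T')|\leq 1$ whenever $T$ and $T'$ differ by a single flip. The triangle inequality then forces at least $2n-2$ flips between $A_n^-$ and $A_n^+$.

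To design $\Phi$, I would exploit the two ingredients that make $A_n^-$ and $A_n^+$ far apart. First, the puncture is incident to a single interior arc, and in $A_n^-$ that arc goes to a boundary vertex $u^-$, while in $A_n^+$ it goes to a boundary vertex $u^+$ at the opposite end of the zigzag; moreover, the loop around the puncture is based at $u^-$ in $A_n^-$ and at $u^+$ in $A_n^+$. Second, after discarding the loop and the self-folded triangle it bounds, the remainder is a triangulated $(n+1)$-gon, and the zigzag triangulations of this polygon appearing in $A_n^-$ and $A_n^+$ are mirror images of one another, hence already at distance $\sim n$ in the associahedron $\A_{n+1}$. A natural candidate is therefore
\[
\Phi(T) = \Phi_{\text{in}}(T)+\Phi_{\text{out}}(T),
\]
where $\Phi_{\text{in}}(T)$ measures how far the puncture-loop-configuration of $T$ has moved along the boundary toward the $A_n^+$-side, and $\Phi_{\text{out}}(T)$ is a weighting on the interior arcs lying outside the loop, tailored so that each outside flip changes it by at most $1$ and so that it differs by exactly $n-1$ between the two zigzags. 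One clean way to realise $\Phi_{\text{out}}$ is to count, for each arc of $T$ that is not in $A_n^+$, a weight equal to $1$, and to check that a flip inside the outside polygon either adds or removes exactly one such non-target arc, while flips involving the loop or the puncture arc contribute to $\Phi_{\text{in}}$ only.

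The main obstacle is the interaction between the two parts of $\Phi$. A single flip can simultaneously modify the loop/puncture configuration and alter a neighbouring outside diagonal, so the contributions $\Phi_{\text{in}}$ and $\Phi_{\text{out}}$ must be engineered to be ``orthogonal'' in the sense that a flip affecting one side does not affect the other by more than can be absorbed. This will require a careful case analysis of the possible flips classified according to whether the flipped arc is (i) the loop, (ii) the puncture-to-boundary arc, (iii) an outside diagonal adjacent to the loop, or (iv) a purely outside diagonal. In cases (iii) and (iv) the analysis reduces to the standard associahedron book-keeping, while cases (i) and (ii) require tracking how the loop migrates along the boundary as in the upper bound proof of Lemma~\ref{Plemma.4.1}.

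Finally, after verifying (a) $\Phi(A_n^-)-\Phi(A_n^+) = 2n-2$ by direct computation on the explicit triangulations of Figures \ref{Pfigure.4.1} and \ref{Pfigure.4.2}, and (b) $|\Phi(T)-\Phi(T')|\leq 1$ by the case analysis above, the bound $d(A_n^-,A_n^+)\geq 2n-2$ follows immediately. Together with Lemma~\ref{Plemma.4.1} this will pin down the diameter of $\T_n$ exactly.
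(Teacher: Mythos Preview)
Your proposal is a plan rather than a proof, and the plan has a structural gap that is not merely a matter of case-checking. You decompose $\Phi=\Phi_{\text{in}}+\Phi_{\text{out}}$ where $\Phi_{\text{in}}$ tracks ``how far the puncture-loop-configuration of $T$ has moved along the boundary''. But along a geodesic from $A_n^-$ to $A_n^+$ most intermediate triangulations will have no loop at all: the puncture can have degree $2,3,\ldots,n$, with no self-folded triangle present. So $\Phi_{\text{in}}$ as you describe it is simply undefined on the bulk of $\T_n$, and there is no evident $1$-Lipschitz extension. Meanwhile your $\Phi_{\text{out}}(T)=|\{\text{arcs of }T\text{ not in }A_n^+\}|$ is well-defined and $1$-Lipschitz, but its gap between $A_n^-$ and $A_n^+$ is at most $n$ (the total number of interior arcs), which falls short of $2n-2$ already for $n\geq3$. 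Thus $\Phi_{\text{in}}$ would have to carry roughly half the load precisely on the triangulations where it is not defined. The ``orthogonality'' issue you flag is real but secondary to this definitional problem.

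For comparison, the paper does not use a potential function at all. It argues by induction on $n$ via a \emph{vertex deletion}: given a geodesic of length $k$ in $\T_{n+1}$ from $A_{n+1}^-$ to $A_{n+1}^+$, one deletes a fixed boundary vertex from every triangulation along it, producing a path in $\T_n$ between relabelled copies of $A_n^-$ and $A_n^+$. The path shortens by exactly the number $l$ of flips touching the triangle at the deleted boundary arc, so $2n-2\leq k-l$ by induction. One then argues directly that $l\geq2$, since a single such flip would have to replace $t^-$ by $t^+$ in one move, and the introduced arc would cross two arcs of the current triangulation. This yields $k\geq 2n$ and sidesteps any need for a globally defined invariant on triangulations of arbitrary combinatorial type.
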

\begin{proof}
The proof proceeds by induction on $n$. When $n=1$, $A_n^-$ and $A_n^+$ are identical as shown in Fig. \ref{Pfigure.4.2}. Hence, they have distance $0$ in $\FF(\Delta^\mstar_n)$ which is equal to $2n-2$ in this case. Assume that the result holds for some $n\geq1$.

Consider the triangulation $U$ of $\Delta^\mstar_n$ obtained from $A_{n+1}^-$ by displacing vertex $a_{n+1}$ to $a_1$ within the boundary, while preserving the incidences between a vertex and an arc. Note that, after this operation the loop that was twice incident to $a_1$ and the interior arc whose endpoints were $a_1$ and $a_{n+1}$ have become isotopic. Therefore, we assume that one of them has been removed from $U$. 
\begin{figure}
\begin{centering}
\includegraphics{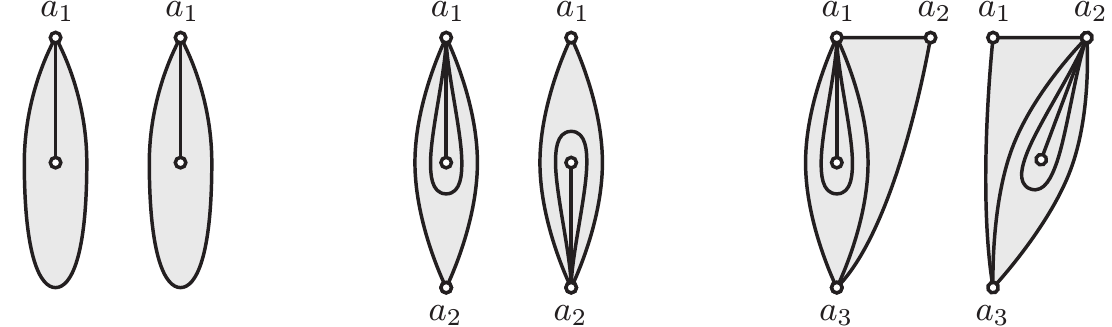}
\caption{Triangulations $A_1^-$, $A_1^+$, $A_2^-$, $A_2^+$, $A_3^-$, and $A_3^+$ (from left to right).}\label{Pfigure.4.2}
\end{centering}
\end{figure}
This operation is referred to in \cite{ParlierPournin2014,ParlierPournin2015,Pournin2014} as the \emph{deletion of vertex $a_{n+1}$ from $A_{n+1}^-$}. Indeed, the identification of vertices $a_1$ and $a_{n+1}$ can be thought of as the removal of the latter. Performing the same operation inside $A_n^+$ results in a triangulation $V$ of $\Delta^\mstar_n$. Two arcs are made isotopic in this case as well, that cannot both be kept in $V$: the boundary arc between $a_1$ and $a_2$ and the interior arc of $A_{n+1}^+$ with vertices $a_2$ and $a_n$. 

Observe that relabelling vertex $a_i$ by $a_{n-i+2}$ within $U$ and $V$ for all $i$ so that $2\leq{i}\leq{n}$ results in $A_n^-$ and $A_n^+$ respectively. In particular, the number of flips required to transform $U$ into $V$ is also the number of flips required to transform $A_n^-$ into $A_n^+$. We now investigate how the deletion of vertex $a_{n+1}$ affects a sequence of flips that changes triangulations $A_{n+1}^-$ and $A_{n+1}^+$ into one another.

Consider a sequence $T_0$, ..., $T_k$ of triangulations of $\Delta^\mstar_{n+1}$ such that $T_0=A_{n+1}^-$, $T_k=A_{n+1}^+$, and whenever $1\leq{i}\leq{k}$, $T_{i-1}$ can be transformed into $T_i$ by a flip. This sequence of triangulations can be thought of as a sequence of $k$ flips that transforms $A_{n+1}^-$ into $A_{n+1}^+$. We will assume that $k$ is the least number of flips required to do so. Deleting  $a_{n+1}$ from triangulations $T_0$, ..., $T_k$ results in a sequence of triangulations that starts with $U$, ends with $V$, and so that two consecutive triangulations are either related by a flip, or identical. Indeed, consider the quadrilateral $q$ modified by the flip that changes $T_{i-1}$ into $T_i$. If this flip affects the triangle incident to the boundary arc $\alpha$ with endpoints $a_{n+1}$ and $a_1$ (that is, if $\alpha$ is an edge of $q$), then the deletion changes $q$ into a triangle. In this case, $T_{i-1}$ is identical to $T_i$. If, on the contrary, $\alpha$ is not an edge of $q$, then the two triangulations obtained by deleting $a_{n+1}$ in $T_{i-1}$ and in $T_i$ are still related by the flip that modifies $q$. In particular, deleting $a_{n+1}$ from $T_0$, ..., $T_k$ provides a sequence of $k-l$ flips that transforms $U$ into $V$, where $l$ is the number of flips that modify the triangle incident to $\alpha$ in the original sequence. Hence, the number of flips required to transform $U$ into $V$, or equivalently $A_n^-$ into $A_n^+$, is at most $k-l$. This number of flips is, by induction, at least $2n-2$. As a consequence,
\begin{equation}\label{Plemma.4.2.equation.1}
2n-2\leq{k-l}\mbox{.}
\end{equation}

Call $t^-$ and $t^+$ the triangles incident to $\alpha$ in respectively $A_{n+1}^-$ and $A_{n+1}^+$. As $t^-$ and $t^+$ are distinct, $l$ must be positive. It turns out that $l$ is necessarily greater than $1$. Indeed, otherwise, a single flip replaces $t^-$ by $t^+$ along our sequence of flips, as shown in Fig. \ref{Pfigure.4.3}.
\begin{figure}
\begin{centering}
\includegraphics{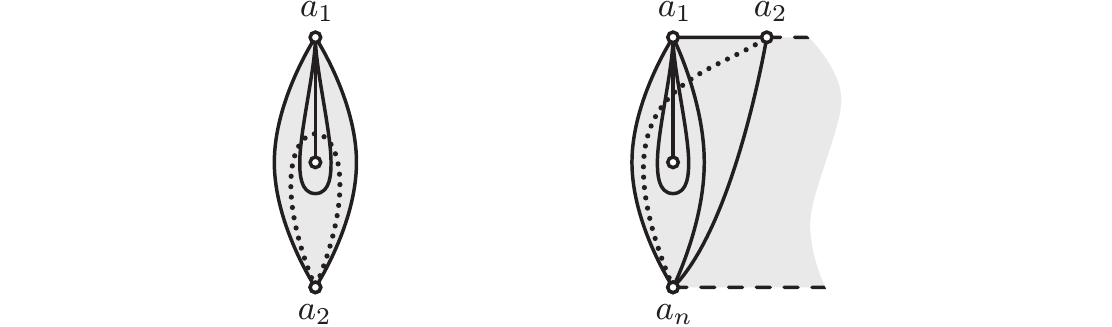}
\caption{A flip that replaces $t^-$ by $t^+$, depicted when $n=2$ (left) and when $n>2$ (right). The introduced edge is dotted.}\label{Pfigure.4.3}
\end{centering}
\end{figure}
As can be seen, the arc introduced by this flip crosses two arcs of the triangulation it is performed in: the arc incident to the puncture and an edge of $t^-$ (which is crossed twice). Hence, such a flip is impossible, and $l$ must be greater than $1$. According to inequality (\ref{Plemma.4.2.equation.1}), the number of flips required to transform $A_{n+1}^-$ into $A_{n+1}^+$ is then at least $2n$, which completes the proof. 
\end{proof}

The following theorem is a direct consequence of the last two lemmas.

\begin{theorem}
For every positive integer $n$, $\diam(\FF(\Delta^\mstar_n)) = 2n-2$.
\end{theorem}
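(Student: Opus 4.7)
The plan is simply to combine the matching upper and lower bounds already established in Lemma~\ref{Plemma.4.1} and Lemma~\ref{Plemma.4.2}. Since $\diam(\FF(\Delta^\mstar_n))$ is defined as the maximum, over all pairs of triangulations, of the flip-distance between them, both a universal upper bound on pairwise distances and the existence of a single pair achieving that value suffice.

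First I would invoke Lemma~\ref{Plemma.4.1}, which asserts that any two triangulations of $\Delta^\mstar_n$ can be connected by a flip path of length at most $2n-2$. Taking the maximum over all pairs yields $\diam(\FF(\Delta^\mstar_n)) \leq 2n-2$.

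Next I would invoke Lemma~\ref{Plemma.4.2}, which exhibits the explicit pair of triangulations $A_n^-$ and $A_n^+$ (depicted in Figures~\ref{Pfigure.4.1} and~\ref{Pfigure.4.2}) and shows that any flip path between them has length at least $2n-2$. This immediately forces $\diam(\FF(\Delta^\mstar_n)) \geq 2n-2$.

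Combining the two inequalities gives $\diam(\FF(\Delta^\mstar_n)) = 2n-2$, as claimed. There is no real obstacle at this stage: the theorem is a one-line corollary of the two preceding lemmas, and all the substantive work (constructing the candidate extremal pair, the induction on $n$, and the combinatorial argument ruling out $l=1$) was carried out in the proof of Lemma~\ref{Plemma.4.2}.
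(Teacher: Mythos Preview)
Your proposal is correct and matches the paper's approach exactly: the paper also states the theorem as a direct consequence of Lemma~\ref{Plemma.4.1} (upper bound) and Lemma~\ref{Plemma.4.2} (lower bound), with no additional argument needed.
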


\begin{remark}
As mentioned in the preliminaries, $\T_n$ is a proper subgraph of $\D_n$ the type D associahedron graph. In \cite{CeballosPilaud2016} it is shown that
$$
\diam\left(\D_n\right) = 2n-2
$$
but the vertices used to prove this lower bound are very different from the ones we exhibit. In fact one of them does not belong to $\T_n$ and the other is distance at most $n-1$ from all vertices of $\T_n$. It seems plausible, though, that the diametrically opposite vertices we exhibit are still diametrically opposite in $\D_n$.
\end{remark}

\section{Geometric cases}

In this section, we consider the flip-graphs of Euclidean polygons or Euclidean punctured polygons. We begin with non-convex polygons. 

\subsection{Flip-graphs of non-convex polygons}

If $P$ is a simple, non-convex $n$-gon, the unique geodesic arc between two different vertices on the boundary is possibly not entirely contained in the polygon. In $\FF(P)$ we only consider triangulations made of arcs entirely contained in the polygon and as such, $\FF(P)$ is the subgraph of $\A_n$ induced by triangulations that do not contain any forbidden arc. There are many different possible non-convex polygons but we will only look at $n$-gons with $n-1$ vertices in convex position. 

We begin by observing the following. 

\begin{proposition}\label{Propos.nc}
Consider a simple $n$-gon $P$. If there is a unique pair of vertices of $P$ so that the segment between them is not contained in $P$, then $\diam(\FF(P)) \geq 2n -10$. 
\end{proposition}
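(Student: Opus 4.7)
The plan is to reduce the question to the already-known diameter of the convex-polygon flip-graph $\A_n$. Since $P$ is a simple $n$-gon with a single forbidden chord $e$, the flip-graph $\FF(P)$ is a subgraph of $\A_n$ consisting precisely of those triangulations that avoid $e$. In particular, any flip-sequence in $\FF(P)$ between two of its vertices $T$ and $T'$ is also a flip-sequence in $\A_n$, which gives the trivial inequality
$$
d_{\FF(P)}(T,T')\ \geq\ d_{\A_n}(T,T')\mbox{.}
$$
By the combined work of Sleator--Tarjan--Thurston and Pournin \cite{Pournin2014,SleatorTarjanThurston1988}, one has $\diam(\A_n)=2n-10$ as soon as $n$ is large enough, and the statement of the proposition is trivially true for the small values of $n$ where $2n-10\leq 0$. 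Hence it suffices to produce two triangulations $T^-$ and $T^+$ of the abstract convex $n$-gon, neither of which uses $e$, with $d_{\A_n}(T^-,T^+)=2n-10$.

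To exhibit such a pair, I would start from an extremal pair $(T^-_n,T^+_n)$ coming from the construction of Pournin \cite{Pournin2014}. Because this pair is defined up to a relabelling of the vertices of the $n$-gon, the $2n$-element dihedral group acts on it to produce a whole orbit of diametral pairs in $\A_n$. The task is then to select from this orbit a pair both of whose triangulations avoid the given chord $e$, after which the displayed inequality immediately delivers $\diam(\FF(P))\geq 2n-10$.

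The main obstacle is precisely this avoidance step: given any prescribed chord $e$ of the $n$-gon, one must find an element of the dihedral orbit in which neither extremal triangulation contains $e$. This reduces to a finite combinatorial check. A pigeonhole argument should suffice: each of the two triangulations in an orbit pair uses exactly $n-3$ interior arcs, and because the dihedral group acts transitively on the chords of any fixed ``gap'', every chord of a given gap is used across the orbit with the same multiplicity, which generically is strictly less than the orbit size $2n$. The only potential subtlety is when $e$ has a gap whose arcs appear with unusually high multiplicity in Pournin's construction; in that case a direct inspection of the extremal triangulations---or the freedom of choosing a combinatorially distinct extremal pair---should still yield an avoiding representative, completing the argument.
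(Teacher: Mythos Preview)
Your overall strategy coincides with the paper's: exhibit a diametral pair from \cite{Pournin2014} that happens to lie in $\FF(P)$, then use that $\FF(P)$ is a subgraph of $\A_n$. The difference is entirely in how the avoidance step is handled, and your version of that step is not actually carried out.

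The pigeonhole sketch you give is vague and, as written, not obviously sufficient: you would need to show that among the $2n$ dihedral translates of Pournin's pair, at least one has \emph{both} triangulations avoiding $e$, and counting occurrences of chords of a fixed gap across the orbit does not directly give this (you would need to control the \emph{union} of the two forbidden-chord sets translate by translate, not just average multiplicities). You flag this yourself with ``should suffice'' and ``should still yield'', so the argument is incomplete.

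The paper short-circuits all of this with a cleaner reformulation. Under the hypothesis, there is a unique reflex vertex $v$, and the unique forbidden segment is precisely the chord between the two neighbours of $v$. Hence a triangulation of $\A_n$ lies in $\FF(P)$ if and only if $v$ is not an ear, i.e.\ $v$ is incident to at least one interior arc. Now one uses a specific feature of Pournin's extremal triangulations: each of the two has exactly two vertices with no incident interior arc. Thus the pair has at most four ``bad'' vertex positions altogether, and for $n\geq 5$ one can rotate the pair so that $v$ is none of them. This replaces your open-ended pigeonhole by a one-line count.
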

\begin{proof}
First note that if $n$ is less than $5$, then $2n-10$ is negative and the result is immediate. We therefore assume that $n$ is at least $5$.

Explicit examples of triangulations in $\A_n$ at distance at least $2n-10$ are given in \cite{Pournin2014}. It is easy to see that they also belong to $\FF(P)$: indeed, $\FF(P)$ contains all the triangulations of $\A_n$ with at least one interior edge incident to the non-convex vertex $v$ of $P$. The two triangulations from \cite{Pournin2014} each have exactly two vertices that are not incident to any of their interior edges. As $n\geq5$, one can therefore place them in $P$ such that they each have at least one interior edge incident to $v$.

As $\FF(P)$ is a subgraph of $\A_n$, the distance of these triangulations in the former graph cannot be less than their distance in the latter.
\end{proof}

In general we have the following slightly weaker lower bound.

\begin{proposition}\label{Prop.NCV}
If $P$ is a simple polygon with $n-1$ vertices in convex position then
$$
\diam(\FF(P)) \geq 2n -16\mbox{.}
$$ 
\end{proposition}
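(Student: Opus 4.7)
The plan follows that of Proposition \ref{Propos.nc}: we want to produce inside $\FF(P)$ a large convex sub-polygon flip-graph whose diameter is controlled by \cite{Pournin2014}. The difficulty is that, when the notch at the unique non-convex vertex $v$ of $P$ is deep, many chords of the convex hull become forbidden and the Pournin triangulations used in Proposition \ref{Propos.nc} can no longer be realized directly inside $\FF(P)$. The plan is therefore to find a convex sub-polygon $P' \subseteq P$ on at least $n-3$ vertices such that the natural embedding $\FF(P') \hookrightarrow \FF(P)$ is strongly convex; then
\[
\diam(\FF(P)) \geq \diam(\FF(P')) = \diam(\A_{n-3}) \geq 2(n-3)-10 = 2n-16
\]
by \cite{Pournin2014}, the inequality being vacuous for small $n$.

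The key preliminary observation is that, since $v$ is the only non-convex vertex of $P$, the visibility polygon of $v$ inside $P$ is all of $P$: every chord $vz$ with $z \in V(P) \setminus \{v\}$ lies in $P$ and is therefore never forbidden. I would use this to build a shielding multi-arc $\mu$ made of at most three chords emanating from $v$, namely $\mu = \{vu', vw'\}$ (augmented by a third chord $vz$ when needed), where $u'$ and $w'$ are the second boundary-neighbors of $v$ on opposite sides. Cutting $P$ along $\mu$ isolates a small already-triangulated region around $v$ from a main sub-polygon $P'$ on at least $n-3$ vertices. The embedding $\FF(P') \hookrightarrow \FF(P)$ obtained by adjoining $\mu$ to each triangulation of $P'$ is then strongly convex, by the geometric analogue of the statement recalled in Section \ref{Psection.2.4}: a flip acts on a single arc, so any geodesic in $\FF(P)$ between two triangulations both containing $\mu$ keeps $\mu$ fixed and stays in the image of $\FF(P')$.

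The main obstacle is the angle-selection step that makes $P'$ convex while losing only at most three vertices. A two-chord shield $\mu = \{vu', vw'\}$ already produces a main sub-polygon on $n-2$ vertices, but it may fail to be convex at $v$ when the reflex angle at $v$ in $P$ is large; in that case I would add one further chord $vz$ into the main region, slicing off one additional vertex but reducing the angle at $v$ in the remaining sub-polygon to below $\pi$. That such a $z$ can always be chosen relies on the fact that the $n-1$ convex-position vertices of $P$ are spread across the interior angle at $v$, which has measure greater than $\pi$, so three chords from $v$ always suffice to leave a convex remainder with at least $n-3$ vertices.
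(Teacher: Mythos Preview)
Your overall strategy---fix a multi-arc, pass to a strongly convex sub-flip-graph, and invoke the $2m-10$ bound of \cite{Pournin2014}---is sound, but the angle-selection step has a genuine gap. The claim that three chords from $v$ always leave a convex remainder on at least $n-3$ vertices is false. Place $b_1,\dots,b_{n-1}$ equally spaced on a circle and put $v$ at its centre, inserted between $b_1$ and $b_{n-1}$; this is a simple polygon with $n-1$ vertices in convex position and reflex angle $2\pi-\tfrac{2\pi}{n-1}$ at $v$. Any sub-polygon of the form $v,b_j,b_{j+1},\dots,b_k$ carries the angle $\tfrac{(k-j)\,2\pi}{n-1}$ at $v$, so convexity at $v$ forces $k-j\le\lfloor(n-1)/2\rfloor$: every convex piece containing $v$ has at most about $n/2$ vertices, far short of $n-3$. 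No bounded number of chords from $v$, each slicing off boundedly many boundary vertices, can overcome this. Pieces avoiding $v$ do not help either: the long chords $b_jb_k$ (with $k-j\ge n-4$) needed to bound such a piece cross the notch triangle $b_1vb_{n-1}$ and are themselves forbidden.

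The paper's proof avoids this obstruction by \emph{not} seeking a single convex remainder. It cuts $P$ along one interior arc $\alpha$ into two polygons $P^-$ and $P^+$ of sizes $n^-+n^+=n+2$, chosen so that each piece merely satisfies the hypothesis of Proposition~\ref{Propos.nc} (at most one forbidden segment, not full convexity). Applying Proposition~\ref{Propos.nc} to each piece and using strong convexity in $\A_n$ of the subgraph of triangulations containing $\alpha$ yields
\[
\diam(\FF(P))\ \ge\ (2n^--10)+(2n^+-10)\ =\ 2n-16.
\]
The essential difference is that distributing the reflex defect over two pieces and summing their contributions succeeds, whereas trying to excise the defect entirely from one large piece does not.
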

\begin{proof}
Any such polygon can be cut along an arc $\alpha$ between two outer marked points such that the two resulting polygons are at worst ``slightly" non-convex in the sense of the previous theorem. Suppose that the resulting polygons are of size $n^-$ and $n^+$: we have $n^-+n^+ = n+2$. The flip-graph of all triangulations containing $\alpha$ is strongly convex inside $\A_n$ \cite{SleatorTarjanThurston1988}. According to Proposition \ref{Propos.nc}, the diameter of the individual flip-graphs are bounded below by $2n^- - 10$ and $2n^+-10$, so the diameter of $\FF(P)$ is bounded below by
$$
2 (n^-+n^+) - 20 = 2 n - 16\mbox{,}
$$
as desired.
\end{proof}

\subsection{Punctured Euclidean polygons}

We now explore the flip-graph of a convex $n$-gon $P^\mstar$ with a single marked point $p$ in its interior. As explained previously, for every $n\geq 3$, we can think of this choice as being the choice of $p$ within a fixed regular Euclidean $n$-gon. We are mainly interested in global properties (that do not depend on the choice of $p$) but some specific properties for certain placements of $p$ will be given as well. 

We recall from the preliminary section that for any choice of $P^\mstar$ there is a natural simplicial embedding $\FF(P^\mstar) \hookrightarrow \T_n$
which is never onto. We show that this embedding has a geometric meaning.

\begin{theorem}
The embedding $\FF(P^\mstar) \hookrightarrow \T_n$ is strongly convex.
\end{theorem}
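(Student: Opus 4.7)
I would proceed by contradiction. Suppose there exist $T, T' \in \FF(P^\mstar)$ and a geodesic $T = T_0, T_1, \ldots, T_k = T'$ in $\T_n$ such that some $T_j$ lies outside $\FF(P^\mstar)$. Then $T_j$ contains an arc $\alpha$ whose isotopy class in $\Delta^\mstar_n$ is not represented by any Euclidean straight segment in $P^\mstar$; equivalently, $\alpha$ has non-trivial winding around the puncture $p$ relative to the straight-line representative with the same endpoints. Since $T$ and $T'$ are geometric, $\alpha$ does not appear in either endpoint, so along the geodesic $\alpha$ is introduced at some flip and later removed. Fix a maximal interval $[i_1, i_2]$ of indices where $\alpha$ is present.

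Next, I would invoke the topological strong convexity recalled in Section \ref{Psection.2.4} (from \cite{DisarloParlier2014}): the subgraph $B_\alpha$ of $\T_n$ consisting of triangulations containing $\alpha$ is strongly convex and simplicially isomorphic to the flip-graph of $\Delta^\mstar_n$ cut along $\alpha$. The sub-path $T_{i_1}, \ldots, T_{i_2}$ is therefore a geodesic in $B_\alpha$.

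The core step is to construct a strictly shorter path from $T_{i_1-1}$ to $T_{i_2+1}$ that remains in $\FF(P^\mstar)$, contradicting the geodesic assumption. Since $P$ is convex, the straight segment $\alpha^*$ joining the endpoints of $\alpha$ lies in $P^\mstar$ and belongs to a geometric isotopy class distinct from that of $\alpha$. Cutting $\Delta^\mstar_n$ along $\alpha^*$ yields the same topological type as cutting along $\alpha$, so there is a natural simplicial isomorphism $B_\alpha \cong B_{\alpha^*}$. I would transport the geodesic within $B_\alpha$ to one within $B_{\alpha^*} \subset \FF(P^\mstar)$, and then argue that the two ``boundary'' flips (introducing $\alpha$ in exchange for some $\gamma_1$, and removing $\alpha$ in exchange for some $\gamma_2$) can each be replaced by at most one flip in $\FF(P^\mstar)$ attaching $T_{i_1-1}$ and $T_{i_2+1}$ to the transported path, saving at least one flip overall.

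The hardest part is implementing this boundary replacement: the isomorphism $B_\alpha \cong B_{\alpha^*}$ matches $\gamma_1$ and $\gamma_2$ to specific arcs, but the geometric constraints near the endpoints can be subtle. A case analysis on the endpoint types of $\alpha$ (outer--outer versus puncture--outer) and on the precise winding of $\alpha$ around $p$ should suffice, leveraging the convexity of $P$ to guarantee that the diagonals of the relevant Euclidean quadrilaterals are straight segments inside $P^\mstar$.
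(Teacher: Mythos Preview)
Your approach differs substantially from the paper's, and it contains a genuine gap.

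The paper does not argue by contradiction or by isolating a single offending arc. Instead it builds, in one stroke, a $1$-Lipschitz retraction $\pi:\T_n\to\FF(P^\mstar)$ by ``pulling arcs tight'' in the Euclidean polygon (imagine each arc as a rubber band stretched around nails placed at the boundary vertices and at $p$). Concretely, a realizable arc is sent to its straight representative, while a non-realizable arc with endpoints $a,b$ on the boundary is sent to the multi-arc $[a,p]\cup[p,b]$. One then checks that triangulations go to triangulations and that adjacent triangulations go to triangulations at distance at most $1$; this yields the result directly, with no case analysis.

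The central problem with your plan is the claim that the transported path lies in $B_{\alpha^*}\subset\FF(P^\mstar)$. The subgraph $B_{\alpha^*}$ consists of all triangulations of $\Delta^\mstar_n$ that contain the realizable arc $\alpha^*$, but such triangulations may perfectly well contain \emph{other} non-realizable arcs; hence $B_{\alpha^*}\not\subset\FF(P^\mstar)$ in general. Worse, the abstract isomorphism $B_\alpha\cong B_{\alpha^*}$ you invoke is not geometric: cutting along $\alpha$ places the puncture in one complementary piece, while cutting along $\alpha^*$ places it in the \emph{other} piece, so the isomorphism swaps the roles of the two sides and does not send realizable arcs to realizable arcs. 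The transported sub-path therefore need not land in $\FF(P^\mstar)$ at all, and the hoped-for flip saving at the two boundary steps has no foundation to stand on.

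There is also a missing case. A non-realizable arc of $\Delta^\mstar_n$ can be a loop based at a single boundary vertex and enclosing the puncture; for such an $\alpha$ there is no straight segment $\alpha^*$ between its endpoints, and cutting along $\alpha$ produces a once-punctured monogon together with an unpunctured disk, which is not what one obtains by cutting along any Euclidean arc of $P^\mstar$. Your dichotomy ``outer--outer versus puncture--outer'' overlooks this possibility (puncture--outer arcs are always realizable, so they never arise as $\alpha$ anyway).
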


\begin{proof}
We will define a projection map $\pi$ from $\T_n$ to $\FF(P^\mstar)$ which acts as the identity on $\FF(P^\mstar) $, and sends points at distance $1$ to points at distance at most $1$. Before defining the map formally, let us describe a physical model of it. Suppose you represent your polygon by putting nails on a wooden board and then place an extra nail in place of the interior marked point. Now take a triangulation of $\Delta^\mstar_n$ (or, equivalently, a vertex of $\T_n$). The image of this triangulation by $\pi$ is what happens when you make each of its arcs a rubber band and string it between the nails that correspond to its vertices.

We now define map $\pi$ in a formal way. We begin by looking at how it affects arcs. Let $\gamma$ be an arc between two (not necessarily distinct) marked points $a$ and $b$ of $\Delta^\mstar_n$. If $\gamma$ can be represented in $P^\mstar$ by a Euclidean line segment with the correct topology (this is the case, for instance, when either $a$ or $b$ is the interior marked point) then we define $\pi(\gamma) $ to be this segment. If it cannot, we define a unique geometric image in $P^\mstar$ as a multi-arc. Let $\alpha$ be the unique Euclidean arc between $a$ and the interior marked point $p$ of $P^\mstar$ and $\beta$ be the unique Euclidean arc between $b$ and $p$. We define $\pi(\gamma) = \alpha \cup \beta$. Note that if $a=b$ then $\alpha=\beta$. 

We now need to show that this map sends triangulations to triangulations. To see this, note that the image of a triangle is always a triangle, possibly with an extra arc from the interior marked point to one its vertices on the boundary. In particular, this means the complement of the image of a triangulation cannot contain any polygon of higher complexity, and so $\pi(T)$ is a triangulation for any $T\in \T_n$.

Triangulations at distance $1$ are mapped by $\pi$ to triangulations at distance at most $1$ as claimed. This is obvious since this map cannot increase the number of intersections between the arcs of two triangulations.
\end{proof}

As a consequence of this, we have the following corollary.

\begin{corollary}\label{Pcorollary.5.2}
Let $T$ and $T'$ belong to $\FF(P^\mstar)$ with $\alpha \in T\cap T'$ an arc. Then all triangulations along a geodesic path between $T$ and $T'$ contain $\alpha$.
\end{corollary}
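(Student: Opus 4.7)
The plan is to deduce this corollary by composing the strong convexity statement just proved with the strong convexity of multi-arc-containing subgraphs in topological flip-graphs recalled in Section \ref{Psection.2.4}.

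First I would fix a geodesic path $\gamma$ joining $T$ to $T'$ in $\FF(P^{\mstar})$. Since the preceding theorem asserts that $\FF(P^{\mstar}) \hookrightarrow \T_n$ is strongly convex, the two intrinsic distances agree on $\FF(P^{\mstar})$, and in particular $\gamma$ is also a geodesic when viewed in $\T_n$: otherwise any strictly shorter $\T_n$-geodesic between $T$ and $T'$ would, by strong convexity, lie inside $\FF(P^{\mstar})$, contradicting the fact that $\gamma$ is already a geodesic there.

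Next I would apply the result mentioned in Section \ref{Psection.2.4}, namely that the natural embedding of $\FF(\Delta^{\mstar}_n\setminus\alpha)$ into $\T_n$, obtained by looking at all triangulations of $\Delta^{\mstar}_n$ that contain $\alpha$, is itself strongly convex. Since both $T$ and $T'$ contain $\alpha$ by assumption, they are vertices of this subgraph, and hence the $\T_n$-geodesic $\gamma$ must lie entirely within it. In other words, every triangulation appearing along $\gamma$ contains $\alpha$, which is exactly the claim.

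I do not expect a genuine obstacle here: the argument is a clean composition of two strong convexity statements, and the only mild point to address is that "geodesic path between $T$ and $T'$" could in principle refer either to a geodesic in $\FF(P^{\mstar})$ or to one in $\T_n$; the first step above shows that these two notions coincide for endpoints in $\FF(P^{\mstar})$, so the conclusion is unaffected by which convention is adopted.
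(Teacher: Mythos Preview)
Your proposal is correct and follows essentially the same approach as the paper: the paper's proof is a two-line version of exactly this argument, first invoking that the property holds in $\T_n$ by Section~\ref{Psection.2.4}, and then transferring it via the strong convexity of $\FF(P^\mstar)\hookrightarrow\T_n$. Your write-up simply makes explicit the intermediate step that a geodesic in $\FF(P^\mstar)$ is also a geodesic in $\T_n$, which the paper leaves implicit.
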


\begin{proof}
As discussed in the preliminaries, this is true for $\T_n$. As the embedding of $ \FF(P^\mstar)$ inside $\T_n$ is strongly convex, this remains true for $\FF(P^\mstar)$.
\end{proof}

Consider the subgraph $\GG$ induced in $\FF(P^\mstar)$ by the triangulations that share a given multi-arc $\mu$. Corollary \ref{Pcorollary.5.2} could also be formulated as the strong convexity of the natural embedding $\GG\hookrightarrow\FF(P^\mstar)$. This convexity property immediately bounds the diameter of $\FF(P^\mstar)$ below by that of $\GG$. If $\mu$ splits $P^\mstar$ into smaller polygons $P_1$, ... $P_k$, then the latter is in turn bounded below as:
$$
\sum_{i=1}^k\diam(\FF(P_i))\leq\diam(\GG)\mbox{.}
$$

From this observation, we obtain the following general lower bound that does not depend on the placement of the puncture.

\begin{theorem}\label{Ptheorem.5.3}
For any punctured convex $n$-gon $P^\mstar$,
$$
\diam(\FF(P^\mstar)) \geq 2n - 12\mbox{.}
$$
\end{theorem}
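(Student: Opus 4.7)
The plan is to apply the decomposition principle of the preceding paragraph to a multi-arc $\mu = \{p v_1, p v_2\}$ consisting of two arcs from the puncture to two boundary vertices $v_1, v_2$ of $P$ chosen in terms of $p$. Cutting $P^\mstar$ along $\mu$ produces two simple sub-polygons $P_c$ and $P_{nc}$, both containing $v_1$, $v_2$, and $p$, so that $|V(P_c)| + |V(P_{nc})| = n + 4$. The first step is to pick $v_1, v_2$ with $\angle v_1 p v_2 < \pi$, so that the piece $P_c$ containing the triangle $p v_1 v_2$ is a convex polygon, while the other piece $P_{nc}$ has $p$ as its unique reflex vertex and every other vertex in convex position.

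The second step is to refine the choice so that $\{v_1, v_2\}$ is the only pair of vertices of $P_{nc}$ whose connecting segment escapes $P_{nc}$. Granting this, Pournin's theorem \cite{Pournin2014} applied to the convex polygon $P_c$ yields $\diam(\FF(P_c)) \geq 2 |V(P_c)| - 10$, and Proposition \ref{Propos.nc} applied to $P_{nc}$ yields $\diam(\FF(P_{nc})) \geq 2 |V(P_{nc})| - 10$. Combining these bounds with the chain $\diam(\FF(P^\mstar)) \geq \diam(\GG) \geq \diam(\FF(P_c)) + \diam(\FF(P_{nc}))$ coming from the preceding discussion,
$$
\diam(\FF(P^\mstar)) \geq 2(n + 4) - 20 = 2n - 12.
$$

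The main obstacle lies in the second step: showing that vertices $v_1, v_2$ satisfying the uniqueness-of-non-visible-pair condition exist regardless of the placement of $p$. The intuition is that the triangle $p v_1 v_2$ must be thin enough that no chord of $P$ between two other vertices of $P_{nc}$ crosses either of the cut arcs $p v_1$ or $p v_2$. This can plausibly be arranged either by taking $v_1$ to be a vertex of $P$ very close to $p$, so that the arc $p v_1$ is nearly a point, or by choosing $v_1, v_2$ so that the chord $v_1 v_2$ passes very close to $p$ on one side, squeezing the triangle against the chord; in either scenario the cut arcs hug the boundary of $P_c$ tightly, preventing unwanted crossings. Pinning down such a recipe uniformly in $p$ and verifying the non-crossing property is the technical crux, which for small $n$ is anyway trivial since the target bound $2n - 12$ becomes non-positive.
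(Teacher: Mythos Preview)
Your approach coincides with the paper's. Both arguments cut $P^{\mstar}$ along two arcs from $p$ to boundary vertices $a,b$ chosen so that one piece is convex and the other has $\{a,b\}$ as its unique non-visible pair, apply the $2m-10$ lower bounds (Pournin's result for the convex piece and Proposition~\ref{Propos.nc} for the other), and sum via the strong convexity of Corollary~\ref{Pcorollary.5.2} together with the vertex count $n^-+n^+=n+4$. The paper adds only one wrinkle you omit: it allows the degenerate alternative that $a$, $b$, $p$ are collinear, in which case both pieces are already convex and Proposition~\ref{Propos.nc} is not even needed.

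You have correctly isolated the one nontrivial step---the existence of such $a,b$ for an arbitrary placement of $p$---and you are right that neither of your heuristics (``$v_1$ very close to $p$'' or ``$v_1v_2$ grazing $p$'') pins it down. The paper does not do better: it simply asserts that the choice is possible and moves on. So your proposal matches the paper's argument at exactly its own level of rigor; the geometric claim you flag is left as an exercise in both. (It is true: once the collinear case is set aside, one can locate suitable $a,b$ by examining the cell of the arrangement of all chords of $P$ that contains $p$, but some care is needed and adjacent $a,b$ do not always suffice.)
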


\begin{proof}
Call $p$ the puncture of $P^\mstar$. Let us consider two distinct arcs $\alpha$ and $\beta$, both incident to $p$. These arcs split $P^\mstar$ into two simple polygons, say $P^-$ and $P^+$. Respectively call $a$ and $b$ the vertices of $\alpha$ and $\beta$ that are distinct from $p$. One can choose $\alpha$ and $\beta$ in such a way that either $a$, $b$, and $p$ are collinear, or $P^+$ is convex and the only line segment between two vertices of $P^-$ that is not contained in $P^-$ is that between $a$ and $b$. The numbers $n^-$ and $n^+$ of vertices of $P^-$ and $P^+$ satisfy
\begin{equation}\label{Ptheorem.5.3.equation.1}
n^-+n^+ = n +4\mbox{.}
\end{equation}

By Proposition \ref{Prop.NCV}, the diameters of $\FF(P^-)$ and $\FF(P^+)$ are bounded below by respectively $2n^--10$ and $2n^+-10$. Now consider the subgraph induced in $\FF(P^\mstar)$ by the triangulations that contain both $\alpha$ and $\beta$. It follows from Corollary \ref{Pcorollary.5.2} that the embedding of this subgraph into $\FF(P^\mstar)$ is strongly convex. Therefore,
$$
\diam(\FF(P^\mstar))\geq2n^--10+2n^+-10\mbox{.}
$$
Combining this inequality with (\ref{Ptheorem.5.3.equation.1}) completes the proof.
\end{proof}

It turns out that Theorem \ref{Ptheorem.5.3} is nearly sharp:

\begin{lemma}\label{Plemma.5.4}
Any two triangulations of any punctured $n$-gon $P^\mstar$ can be transformed into one another using at most $2n-6$ flips.
\end{lemma}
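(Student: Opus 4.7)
The plan is to identify a fixed target triangulation $T_0$ and show that every $T\in\FF(P^\mstar)$ lies within $n-3$ flips of $T_0$. The bound $2n-6$ then follows from the triangle inequality. The natural choice for $T_0$ is the \emph{star triangulation}, whose $n$ interior arcs are the straight segments from the puncture $p$ to each outer vertex of $P$. Since $P$ is convex and $p$ lies in its interior, these segments are pairwise non-crossing and contained in $P$, so $T_0$ is a valid vertex of $\FF(P^\mstar)$.

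First I would observe that every geometric triangulation $T$ of $P^\mstar$ has at least three arcs incident to $p$, because the triangles incident to $p$ partition the neighborhood of $p$ into angular wedges summing to $2\pi$, and each wedge is the angle at $p$ of a non-degenerate Euclidean triangle, hence strictly less than $\pi$. Call this degree $d_T(p)\geq 3$. Counting arcs, $T$ and $T_0$ differ in exactly $n-d_T(p)$ arcs, so at least that many flips are needed to transform one into the other. The heart of the argument is establishing the reverse: whenever $T\neq T_0$, there exists a flip in $T$ that strictly increases $d_T(p)$. Iterating such flips transforms $T$ into $T_0$ in at most $n-d_T(p)\leq n-3$ steps, and the lemma follows.

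To produce such a flip, I would consider any slice around $p$ that contains an outer vertex not adjacent to $p$, which exists since $T\neq T_0$. Within this slice, $p$ has only two incident edges, so it belongs to exactly one triangle of $T$, an apex triangle $p x_0 x_m$ where $x_0$ and $x_m$ are the two neighbors of $p$ bounding the slice. The arc $x_0 x_m$ is opposite $p$ in this triangle and is shared with a second triangle $x_0 x_j x_m$ lying in the same slice, so flipping $x_0 x_m$ would produce the new $p$-incident arc $p x_j$, thereby strictly increasing $d_T(p)$. The main technical point is verifying that the quadrilateral $p x_0 x_j x_m$ is convex. The key observation is that the slice has angle strictly less than $\pi$ at $p$ (it equals the angle at $p$ of the apex triangle), so the angular positions of $x_0$, $x_j$, $x_m$ as viewed from $p$ are unambiguously ordered, with $x_j$ strictly between $x_0$ and $x_m$. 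This forces $x_0$ to lie outside the triangle $p x_j x_m$ and $x_m$ to lie outside the triangle $p x_0 x_j$; combined with $p$ and $x_j$ being on opposite sides of the line through $x_0 x_m$, this yields that $\{p, x_0, x_j, x_m\}$ is in convex position, so the flip is geometrically valid.
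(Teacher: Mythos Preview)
Your argument is correct and follows the same approach as the paper: both proofs transform an arbitrary triangulation into the star triangulation around $p$ by repeatedly flipping to increase $d_T(p)$, use $d_T(p)\geq 3$ to get the $n-3$ bound, and conclude via the triangle inequality. The paper's own proof is a two-line reference back to Lemma~\ref{Plemma.4.1}, simply noting that the additive constant improves from $-2$ to $-6$ because the puncture has degree at least $3$; it does not spell out why the degree-increasing flip is geometrically valid. Your additional paragraph verifying that the quadrilateral $p\,x_0\,x_j\,x_m$ is convex (via the angular-ordering argument from $p$, which works because the slice lies in a cone of opening $<\pi$) fills in precisely the detail the paper leaves implicit, and it is sound.
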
 
\begin{proof}
Let $P^\mstar$ be a punctured polygon. We can use a a proof similar to that of Lemma \ref{Plemma.4.1}. As the puncture is incident to at least three arcs of a triangulation of $P^\mstar$, the additive constant in the bound is indeed $-6$ instead of $-2$. 
\end{proof}

The gap between the bounds provided by Theorem \ref{Ptheorem.5.3} and Lemma \ref{Plemma.5.4} is small. For some placements of the puncture, this gap can be made even smaller using the result of \cite{Pournin2014} together with Corollary \ref{Pcorollary.5.2}.
\begin{theorem}\label{Ptheorem.5.5}
For any convex $n$-gon $P$, one can find a placement of the puncture so that the resulting punctured polygon $P^\mstar$ satisfies $\diam(\FF(P^\mstar))\geq2n-8$.
\end{theorem}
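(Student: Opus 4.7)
The plan is to place the puncture $p$ close enough to a boundary edge of $P$ that two arcs from $p$ to the endpoints of that edge cut off a thin triangle. Restricting to the subgraph of $\FF(P^\mstar)$ induced by the triangulations containing both these arcs will give, via Corollary \ref{Pcorollary.5.2}, a strongly convex copy of the flip-graph of the complementary slightly non-convex $(n+1)$-gon, to which Proposition \ref{Propos.nc} applies.

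Concretely, fix any edge $[v_1, v_2]$ of $P$ and place $p$ strictly inside $P$ sufficiently close to this edge, for instance just above its midpoint along the inward normal. Set $\alpha = [v_1, p]$ and $\beta = [p, v_2]$. Together with the boundary edge $[v_1, v_2]$, these two arcs bound a thin triangle $\Delta = v_1 v_2 p$, and the complement $P^+ := \overline{P \setminus \Delta}$ is a simple $(n+1)$-gon with vertices $v_1, v_n, v_{n-1}, \ldots, v_2, p$ in cyclic order, whose only reflex vertex is $p$. If $p$ is chosen close enough to $[v_1, v_2]$, the unique pair of vertices of $P^+$ whose connecting Euclidean segment is not contained in $P^+$ is $\{v_1, v_2\}$, so Proposition \ref{Propos.nc} applied to $P^+$ gives
$$
\diam(\FF(P^+)) \geq 2(n+1)-10 = 2n-8.
$$

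To transfer this bound to $\FF(P^\mstar)$, let $\GG$ denote the subgraph of $\FF(P^\mstar)$ induced by the triangulations that contain both $\alpha$ and $\beta$. Any such triangulation decomposes uniquely as the (unique) triangulation of $\Delta$ together with a triangulation of $P^+$, and this decomposition respects flips, so $\GG$ is canonically isomorphic to $\FF(P^+)$ as a flip-graph. Applying Corollary \ref{Pcorollary.5.2} once for $\alpha$ and once for $\beta$ shows that every geodesic of $\FF(P^\mstar)$ with both endpoints in $\GG$ passes only through triangulations containing both $\alpha$ and $\beta$, hence stays in $\GG$. Therefore $\GG$ is strongly convex in $\FF(P^\mstar)$, which yields
$$
\diam(\FF(P^\mstar)) \geq \diam(\GG) = \diam(\FF(P^+)) \geq 2n-8.
$$

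The only step requiring some care is the geometric claim that, for $p$ chosen close enough to $[v_1, v_2]$, no segment $[v_i, v_j]$ with $\{i, j\} \neq \{1, 2\}$ enters the thin triangle $\Delta$. At $v_1$ this reduces to the local condition that the direction of $\alpha$ lies strictly between the edge direction $v_2 - v_1$ and the diagonal direction $v_3 - v_1$, and symmetrically at $v_2$; for every other segment $[v_i, v_j]$, positive distance from the closed edge $[v_1, v_2]$ provides a slack that can be absorbed by shrinking $\Delta$. Both conditions are automatically satisfied once $p$ is sufficiently close to $[v_1, v_2]$ and away from its endpoints.
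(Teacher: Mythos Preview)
Your proof is correct and takes a genuinely different route from the paper's. The paper places the puncture in the intersection of two carefully chosen triangles of the extremal pair $U,V$ from \cite{Pournin2014}, flips the puncture into each to obtain $U',V'\in\FF(P^\mstar)$, and then uses a \emph{contraction} along the arc from $p$ to the shared vertex $a$ to relate any geodesic from $U'$ to $V'$ back to a path from $U$ to $V$ in $\A_n$; the count of flips lost to the contraction gives the extra $+2$. Your argument instead refines the splitting idea behind Theorem~\ref{Ptheorem.5.3}: by pushing $p$ toward a boundary edge, the two arcs $\alpha,\beta$ cut off a single triangle, so the strongly convex subgraph $\GG$ is the flip-graph of an $(n{+}1)$-gon with exactly one forbidden diagonal, and Proposition~\ref{Propos.nc} gives $2(n{+}1)-10=2n-8$ directly.

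Your approach is more elementary---it uses only Corollary~\ref{Pcorollary.5.2} and Proposition~\ref{Propos.nc}, with no new deletion/contraction machinery---and it makes transparent why the bound of Theorem~\ref{Ptheorem.5.3} improves by $4$ when one is free to choose the puncture. The paper's method, on the other hand, is what later drives Theorem~\ref{Ptheorem.6.2} on the pointihedron, where Corollary~\ref{Pcorollary.5.2} is unavailable because the puncture can be flipped in and out; your argument does not obviously adapt to that setting.
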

\begin{proof}
Let $P$ be a convex $n$-gon. If $n$ is equal to $3$ or $4$, then the result is immediate as $\diam(\FF(P^\mstar))$ is non-negative. We therefore assume that $n$ is at least $4$. According to \cite{Pournin2014}, one can find two triangulations $U$ and $V$ of $P$ whose distance in $\FF(P)$ is not less than $2n-10$. It can be required that some triangles $u$ and $v$ of respectively $U$ and $V$ have non-disjoint interiors and exactly one common vertex $a$. Indeed, the triangulations at distance at least $2n-10$ given in \cite{Pournin2014} (see Figs. 5 and 6 therein) have such triangles whenever $n\geq4$. Now place the puncture, which we denote by $p$, both in the interior of $u$ and in that of $v$. Call $U'$ and $V'$ the triangulations of $P^\mstar$ obtained by flipping $p$ into respectively $U$ and $V$.

Consider a sequence $T_0$, ..., $T_k$ of triangulations of $P^\mstar$ such that $T_0=U'$, $T_k=V'$, and whenever $1\leq{i}\leq{k}$, $T_{i-1}$ can be transformed into $T_i$ by a flip. We will use an operation similar to the vertex deletions performed in the proof of Lemma \ref{Plemma.4.2}. By Corollary \ref{Pcorollary.5.2}, any $T_i$ contains the arc $\alpha$ with vertices $a$ and $p$. Hence, we can remove this arc from $T_i$ by displacing $p$ to $a$ while keeping the incidences between a vertex and an arc unaffected. Note that two pairs of arcs of $T_i$ are made isotopic by this operation. Removing one arc from each of these pairs results in a triangulation of $P$. We refer to this operation as the \emph{contraction of $T_i$ along arc $\alpha$}. 

By construction contracting $U'$ and $V'$ along $\alpha$ respectively results in $U$ and $V$. As in the proof of Lemma \ref{Plemma.4.2}, the triangulations obtained from $T_{i-1}$ and $T_i$ by this contraction are either identical or related by a flip. In fact they are identical when the flip that transforms $T_{i-1}$ into $T_i$ affects one of the two triangles incident to $\alpha$. Hence, contracting $T_0$, ..., $T_k$ along $\alpha$ provides a sequence of $k-l$ flips that transform $U$ into $V$, where $l$ is the number of flips that affect a triangle incident to $\alpha$. As at least $2n-10$ flips are required to change $U$ into $V$,
\begin{equation}\label{Ptheorem.5.5.equation.1}
2n-10\leq{k-l}\mbox{.}
\end{equation}

Now recall that triangles $u$ and $v$ have a unique common vertex. It follows that a triangle of $U'$ incident to $\alpha$ cannot be a triangle of $V'$. As a single flip cannot affect both the triangles incident to $\alpha$, there must be at least two flips that affect a triangle incident to $\alpha$ along any sequence of flips that transforms $U'$ and $V'$. As a consequence, $l\geq2$ and, assuming that $k$ is the least number of flips needed to transform $U'$ into $V'$, the result follows from inequality (\ref{Ptheorem.5.5.equation.1}).
\end{proof}

\section{The geometry of the pointihedron}

Let $P^\mstar$ be a punctured polygon obtained by placing a puncture within a polygon $P$. Recall that ${\overline{\mathcal F}}(P^{\mstar})$ is obtained by considering both $\mathcal{F}(P)$ and $\mathcal{F}(P^{\mstar})$ and connecting them with the flips that introduce or remove the puncture. It turns out that these additional flips allow for an upper bound of $2n-7$ on the diameter of ${\overline{\mathcal F}}(P^{\mstar})$, which is less by one than our upper bound on $\diam(\FF(P^{\mstar}))$:

\begin{lemma}\label{Plemma.6.1}
For any punctured $n$-gon $P^\mstar$, 
$$
\diam({\overline{\mathcal F}}(P^{\mstar}))\leq\diam(\A_n)+3\mbox{.}
$$
\end{lemma}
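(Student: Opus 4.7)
The plan is to route every path through the subgraph $\mathcal F(P) \cong \A_n$ of $\overline{\mathcal F}(P^\mstar)$. Since this subgraph has diameter exactly $\diam(\A_n)$, a path that enters $\mathcal F(P)$, crosses it along a geodesic, and then exits can be made to have length $\diam(\A_n)$ plus the (hopefully small) transition cost on each endpoint. The additive constant $3$ in the claimed bound should be interpreted as the total budget for these two transitions.

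Concretely, given $T_1,T_2 \in \overline{\mathcal F}(P^\mstar)$, I would proceed as follows. First, handle the easy case: if both $T_1$ and $T_2$ already lie in $\mathcal F(P)$, then a geodesic in $\mathcal F(P)$ of length at most $\diam(\A_n)$ connects them. Otherwise, for each $T_i$ lying in $\mathcal F(P^\mstar)$ I would construct a triangulation $S_i \in \mathcal F(P)$ together with a short path from $T_i$ to $S_i$ in $\overline{\mathcal F}(P^\mstar)$. The central tool is the puncture-removal flip, which is available whenever the puncture has degree $3$ (triangle case) or degree $4$ in the edge configuration; when the puncture has higher degree one first performs a few intra-$\mathcal F(P^\mstar)$ flips reducing its degree. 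Once both endpoints are in $\mathcal F(P)$, I would invoke the $\diam(\A_n)$ bound between $S_1$ and $S_2$ and concatenate the three pieces, arguing that the total length of the two transition segments is at most $3$.

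The key step, and the main obstacle, is to make the transition bound independent of the puncture's degree. A naive degree-reduction argument bounds the cost of entering $\mathcal F(P)$ from $T_i$ by $\deg(p)-2$, which is as bad as $n-2$ in the worst case. To bring the combined transition cost down to $3$, one cannot afford to reduce the puncture to degree $3$ in the canonical way; instead, one should choose the targets $S_1,S_2$ adaptively so that $S_i\mathord{\uparrow}p$ (or a nearby triangulation in $\mathcal F(P^\mstar)$) already agrees with $T_i$ on many non-puncture arcs, and so that the $\A_n$-geodesic between $S_1$ and $S_2$ absorbs most of the structural discrepancy between $T_1$ and $T_2$. I expect this to be the delicate combinatorial heart of the proof, using one puncture-flip on each side plus one ``slack'' flip to reconcile the geodesic's position in $\A_n$ with the puncture's position in $P$.
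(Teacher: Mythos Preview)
Your strategy of routing every path through $\mathcal F(P)\cong\A_n$ cannot close, and the obstacle you flag is fatal rather than ``delicate''. If $T_1$ is the fan triangulation $F$ of $P^\mstar$ (all $n$ interior arcs incident to $p$), then any walk from $T_1$ into $\mathcal F(P)$ must reduce $\deg(p)$ from $n$ down to $3$ (or $4$, if $p$ happens to sit on a diagonal) one unit at a time before the puncture-removal flip, so it costs at least $n-3$ flips regardless of which $S_1\in\mathcal F(P)$ you aim for. If $T_2$ is, say, one flip away from $F$ (so $\deg(p)=n-1$ there), the exit cost is at least $n-4$. Hence every route $T_1\to S_1\to S_2\to T_2$ that passes through $\mathcal F(P)$ has length at least $(n-3)+(n-4)=2n-7>\diam(\A_n)+3$ once $n$ is large, even though the true distance between $T_1$ and $T_2$ is $1$. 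No adaptive choice of $S_1,S_2$ helps: the flips that lower $\deg(p)$ produce boundary--boundary arcs that need not appear in $T_2$ nor on any $\A_n$-geodesic, so there is nothing for the $\A_n$ segment to ``absorb''.

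The paper sidesteps this by using a second hub. It splits on the degree of $p$. When the puncture has high degree in $T_1$ and $T_2$ (combined degree at least $7$), it routes through $F$ instead of through $\A_n$: each $T_i$ reaches $F$ in $n-\deg_{T_i}(p)$ flips, giving a path of length at most $2n-7$, and then the known \emph{lower} bound $\diam(\A_n)\geq 2n-10$ is invoked to rewrite $2n-7$ as at most $\diam(\A_n)+3$. Only when the puncture has low degree (combined degree $6$, or $\deg(p)\leq4$ in the mixed case) does the argument route through $\A_n$, and there the transitions genuinely cost one or two flips. The ingredients you are missing are this two-hub case split and the use of the lower bound on $\diam(\A_n)$.
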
 
\begin{proof}
Let $P^\mstar$ be a punctured polygon obtained by placing a puncture $p$ within a $n$-gon $P$. Call $F$ the triangulation of $P^\mstar$ whose all arcs are incident to $p$. We will exhibit sequences of flips that transform two triangulations of $P$ or $P^\mstar$ into one another. We review three cases depending on whether each of them is a triangulation of $P$ or $P^\mstar$.

First consider two triangulations of $P^\mstar$. When the combined degree of $p$ within them is a least $7$, then these triangulations can be transformed into one another by at most $2n-7$ flips via $F$, by the same procedure as in the proofs of Lemmas \ref{Plemma.4.1} and \ref{Plemma.5.4}. Since $\diam(\mathcal{A}_n)\geq2n-10$, the result follows. If the combined degree of $p$ in the two triangulations is $6$ (it cannot be less), then they each can be transformed into a triangulation of $P$ by the flip that removes the puncture. Hence, their distance in ${\overline{\mathcal F}}(P^{\mstar})$ is at most $\diam(\A_n)+2$, and the result holds.

Now consider a triangulation $T$ of $P^\mstar$. We will transform it into an arbitrary triangulation of $P$. If the degree of $p$ in $T$ is at least $5$, then it can be changed into $F$ by at most $n-5$ flips. Any triangulation of $P$ can be transformed into $F$ by first flipping $p$ into it and then by introducing all the missing arcs incident to $p$ using at most $n-3$ flips. This shows that $T$ can be transformed into any triangulation of $P$ by at most $2n-7$ flips. Since $\diam(\mathcal{A}_n)\geq2n-10$, the result follows. If the degree of $p$ in $T$ is at most $4$, then it can be transformed into a triangulation of $P$ by at most $2$ flips. Hence the distance of $T$ in ${\overline{\mathcal F}}(P^{\mstar})$ to any triangulation of $P$ is at most $\diam(\A_n)+2$, which implies the desired result.

Finally, two triangulations of $P$ can, by definition, be transformed into one another by at most $\diam(\A_n)$ flips, which completes the proof.
\end{proof}

\begin{remark}
As $\A_n$ has diameter $2n-10$ whenever $n$ is at least $13$, it is a direct consequence of Lemma \ref{Plemma.6.1} that the diameter of ${\overline{\mathcal F}}(P^{\mstar})$ is at most $2n-7$ for any punctured polygon $P^\mstar$ with at least $13$ boundary vertices.
\end{remark}

Let us turn our attention to lower bounds. For some placements of the puncture, we obtain a lower bound on the diameter of the pointihedron that matches our lower bound on the diameter of $\FF(P^\mstar)$.

\begin{theorem}\label{Ptheorem.6.2}
For any convex $n$-gon $P$, one can find a placement of the puncture so that the resulting punctured polygon $P^\mstar$ satisfies $\diam(\overline{\FF}(P^\mstar))\geq2n-8$.
\end{theorem}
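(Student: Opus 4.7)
The plan is to adapt the proof of Theorem~\ref{Ptheorem.5.5} to the pointihedron, with the key additional ingredient that, by placing the puncture $p$ sufficiently close to a fixed vertex $a$ of $P$, one will be able to ensure that every puncture-flip in $\overline{\FF}(P^\mstar)$ produces a triangulation of $P^\mstar$ containing the arc $\alpha$ from $p$ to $a$. This will let a single contraction argument handle the entire geodesic path, even across any excursions it makes into $\FF(P)$.

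First, as in Theorem~\ref{Ptheorem.5.5}, I would fix $U$, $V \in \FF(P)$ at distance at least $2n-10$ together with triangles $u$ of $U$ and $v$ of $V$ sharing exactly the vertex $a$ and having overlapping interiors. Because these triangles share a vertex and their interiors overlap, their angular sectors at $a$ must also overlap, so $u\cap v$ contains an entire neighborhood of $a$ in that sector. I would then choose $p$ inside $u\cap v$, close enough to $a$ that in every $T\in\FF(P)$ the point $p$ lies in the interior of a triangle of $T$ incident to $a$, and generic in the sense that $p$ avoids all lines joining two vertices of $P$. The triangulations $U'$ and $V'$ of $P^\mstar$ obtained by puncture-inserting $p$ into $U$ and $V$ will then both contain $\alpha$.

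Taking a minimum-length sequence $T_0 = U', \dots, T_k = V'$ in $\overline{\FF}(P^\mstar)$, the main technical step is to show that every $T_i$ lying in $\FF(P^\mstar)$ contains $\alpha$. By genericity, every puncture-flip of the pointihedron is between some $T\in\FF(P)$ and the triangulation of $P^\mstar$ in which $p$ is joined to the three vertices of the triangle of $T$ containing it; by the proximity of $p$ to $a$, that triangle is incident to $a$, so the resulting triangulation of $P^\mstar$ contains $\alpha$. Hence the endpoints of each maximal sub-path of $T_0, \dots, T_k$ lying within $\FF(P^\mstar)$ contain $\alpha$. Each such sub-path has length equal to the distance between its endpoints in $\overline{\FF}(P^\mstar)$ by minimality, which is bounded above by their distance in $\FF(P^\mstar)$, itself bounded above by the sub-path length; so the sub-path is actually a geodesic of $\FF(P^\mstar)$, and Corollary~\ref{Pcorollary.5.2} yields the claim.

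Finally, I would contract every $T_i\in\FF(P^\mstar)$ along $\alpha$, leaving each $T_i\in\FF(P)$ unchanged, to obtain a sequence $S_0 = U, \dots, S_k = V$ of triangulations of $P$ in which consecutive terms are either equal or related by a flip. As in Lemma~\ref{Plemma.4.2}, any flip of $\FF(P^\mstar)$ modifying a triangle incident to $\alpha$ collapses to a degenerate step; and every puncture-flip does as well, since the contraction of the $\FF(P^\mstar)$-side along $\alpha$ is precisely the $\FF(P)$-side. If the path enters $\FF(P)$, then it uses at least two puncture-flips (to enter and to leave), yielding at least two degenerate steps. Otherwise, since $u$ and $v$ share only $a$, the pairs of triangles incident to $\alpha$ in $U'$ and $V'$ are disjoint, so at least two flips along the path must modify such triangles, again giving at least two degenerate steps. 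In either case one obtains $d_{\FF(P)}(U,V) \leq k-2$, which combined with $d_{\FF(P)}(U,V) \geq 2n-10$ forces $k \geq 2n-8$. The hard part is ensuring that the contraction argument survives the excursions into $\FF(P)$: a pointihedron-geodesic need not stay in $\FF(P^\mstar)$, so Corollary~\ref{Pcorollary.5.2} does not apply directly to its sub-paths; placing $p$ very close to $a$ is what makes every re-entry into $\FF(P^\mstar)$ land at a triangulation still containing $\alpha$.
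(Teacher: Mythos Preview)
Your proof is correct and follows essentially the same strategy as the paper: place $p$ so that the arc $\alpha$ from $p$ to a fixed boundary vertex $a$ can be contracted along the whole geodesic, and count degenerate steps. The paper is slightly more direct: it takes $u$ to be an \emph{ear} of $U$ at $a$ (equivalently, it places $p$ inside the triangle $a_{-1}aa_{+1}$, which is exactly what your ``close enough to $a$'' condition amounts to) and observes that this forces $\alpha$ to lie in \emph{every} triangulation of $P^\mstar$, since no Euclidean segment between boundary vertices other than $a_{-1}a_{+1}$ can enter that ear; your sub-path argument via Corollary~\ref{Pcorollary.5.2} is therefore unnecessary, though valid.
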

\begin{proof}
The proof will be similar to that of Theorem \ref{Ptheorem.5.5}, except that Corollary \ref{Pcorollary.5.2} cannot be invoked in this case as the puncture can appear or disappear along a sequence of flips. This will further restrict the placement of the puncture. Consider a convex $n$-gon $P$. If $n$ is equal to $3$ or $4$, then the result is immediate as $\diam(\overline{\FF}(P^\mstar))\geq0$. We therefore assume that $n$ is at least $4$.

As in the proof of Theorem \ref{Ptheorem.5.5}, we build two triangulations $U'$ and $V'$ of $P^\mstar$ by flipping the puncture into two triangulations $U$ and $V$ of $P$ whose distance in $\FF(P)$ is not less than $2n-10$. We require that some ear $u$ of $U$ (i.e. a triangle with two edges in the boundary of $P$) and some triangle $v$ of $V$ have non-disjoint interiors and exactly one common vertex $a$. Note that $a$ is then necessarily the vertex that is incident to both the edges of $u$ in the boundary of $P$. This requirement is satisfied, for instance, by the triangulations at distance at least $2n-10$ given in \cite{Pournin2014} (see Figs. 5 and 6 therein) as soon as $n\geq4$. Place the puncture, which we denote by $p$, both in the interior of $u$ and in that of $v$. Call $U'$ and $V'$ the triangulations of $P^\mstar$ obtained by flipping $p$ into respectively $U$ and $V$.

Note that, as $u$ is an ear of $U$, then the arc $\alpha$ between $a$ and $p$ belongs to all the triangulations of $P^\mstar$. This makes it possible to contract them along $\alpha$ as in the proof of Theorem \ref{Ptheorem.5.5}, even though we cannot invoke Corollary \ref{Pcorollary.5.2}.

Consider a sequence $T_0$, ..., $T_k$ of triangulations of $P$ or $P^\mstar$ such that $T_0=U'$, $T_k=V'$, and whenever $1\leq{i}\leq{k}$, $T_{i-1}$ can be transformed into $T_i$ by a flip. We will contract $T_i$ along $\alpha$ whenever $T_i$ is a triangulation of $P^\mstar$. In this case $\alpha$ belongs to $T_i$ and one can perform this contraction in exactly the same way as in the proof of Theorem \ref{Ptheorem.5.5}. By construction, this contraction transforms $U'$ and $V'$ in respectively $U$ and $V$. When $T_i$ is already a triangulation of $P$, we assume as a convention that the contraction along $\alpha$ does not affect it. The triangulations obtained from $T_{i-1}$ and $T_i$ by the deletion are either identical or related by a flip. As in the proof of Theorem \ref{Ptheorem.5.5}, they are identical when the flip that transforms $T_{i-1}$ into $T_i$ affects one of the two triangles incident to $\alpha$. In particular, this is the case when this flip either introduces the puncture or removes it. Hence, contracting $T_0$, ..., $T_k$ along $\alpha$ provides a sequence of $k-l$ flips that transforms $U$ into $V$, where $l$ is the number of flips that affect a triangle incident to $\alpha$. As at least $2n-10$ flips are required to change $U$ into $V$, the following inequality holds:
\begin{equation}\label{Ptheorem.6.2.equation.1}
2n-10\leq{k-l}\mbox{.}
\end{equation}

As argued in the proof of Theorem \ref{Ptheorem.5.5}, if the puncture belongs to all triangulations $T_0$, ..., $T_k$, then $l$ is at least $2$. Now, if the puncture is removed by some flip between two of these triangulations, then some other flip must reintroduce it later because $V'$ admits $p$ as a vertex. These two flips affect a triangle incident to $\alpha$, and $l$ must be at least $2$ in this case as well. As a consequence, assuming that $k$ is the least number of flips needed to transform $U'$ into $V'$, the result follows from inequality (\ref{Ptheorem.6.2.equation.1}).
\end{proof}

It seems the above can be improved. In fact we conjecture the following.

\begin{conjecture}
Consider a convex $n$-gon $P$ so that $n\geq13$. For some placement of the puncture in the interior of $P$, the resulting punctured polygon $P^\mstar$ satisfies
$$
\diam(\overline{\FF}(P^\mstar)) = 2n-7.
$$
\end{conjecture}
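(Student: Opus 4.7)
The upper bound $\diam(\overline{\FF}(P^\mstar)) \leq 2n-7$ for $n\geq 13$ is already in hand from Lemma \ref{Plemma.6.1} combined with $\diam(\A_n)=2n-10$ of \cite{Pournin2014}, so the task is to exhibit, for some placement of the puncture, a pair of triangulations in $\overline{\FF}(P^\mstar)$ at distance at least $2n-7$. The plan is to push the contraction argument of Theorem \ref{Ptheorem.6.2} one unit further. Starting from a pair $U,V\in\FF(P)$ realising the $\A_n$-diameter $2n-10$, together with an ear $u$ of $U$ at tip $a$ and a triangle $v$ of $V$ sharing exactly the vertex $a$, place $p$ in $u\cap v$ and let $U',V'\in\FF(P^\mstar)$ be obtained by flipping $p$ into $U$ and $V$. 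Contracting any flip sequence of length $k$ between $U'$ and $V'$ in $\overline{\FF}(P^\mstar)$ along the forced arc $\alpha=\overline{pa}$ gives $k-l\geq 2n-10$, where $l$ counts the flips affecting a triangle incident to $\alpha$. Theorem \ref{Ptheorem.6.2} already delivers $l\geq 2$, so we would be done if we could guarantee $l\geq 3$.

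The first step is to select, among the extremal pairs of \cite{Pournin2014}, one satisfying a reinforcing interleaving condition: the third vertices $\{a_{i-1},a_{i+1}\}$ of the $\alpha$-incident triangles of $U'$ and the third vertices $\{b,c\}$ of those of $V'$ should interleave in the cyclic order of the polygon vertices around $p$. Under this condition, any sequence of edge-flips through $p$'s star turning $\{a_{i-1},a_{i+1}\}$ into $\{b,c\}$ must pass through at least three intermediate configurations that affect an $\alpha$-incident triangle, yielding $l\geq 3$ in the subcase where the puncture stays put throughout. When the puncture is removed and later reintroduced, the removal and reintroduction already contribute $l\geq 2$, and an additional $\alpha$-touching flip must appear either in the intermediate $\FF(P)$-subpath or at the boundary where the arc structure at $a$ is rebuilt; the interleaving condition is what forces this third contribution.

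The principal obstacle is confirming that an explicit pair $(U,V)$ from \cite{Pournin2014}, or a mild modification thereof, realises the full $\A_n$-diameter $2n-10$ and the interleaving condition simultaneously, since the extremal triangulations of that reference are combinatorially rigid. An auxiliary avenue worth trying is to ask further that $v$ be an ear of $V$ at a tip $a'$ adjacent to $a$ on $\partial P$; two adjacent ears of a convex polygon overlap in a non-empty region, so $p$ can lie in their common interior, producing a second forced arc $\alpha'=\overline{pa'}$ in every triangulation of $P^\mstar$. Running the contraction along each of $\alpha$ and $\alpha'$ yields $k-l_\alpha\geq 2n-10$ and $k-l_{\alpha'}\geq 2n-10$; a disjointness-of-flip-types argument, exploiting that $\alpha$- and $\alpha'$-incident flips touch different triangles of $p$'s star, should then give $l_\alpha+l_{\alpha'}\geq 6$ and hence $k\geq 2n-7$. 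Either route reduces the conjecture to a delicate existence statement about extremal pairs in $\FF(P)$ with refined combinatorial structure, which is precisely what the current methods do not deliver and explains why the statement is left as a conjecture.
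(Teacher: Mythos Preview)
The paper offers no proof of this statement: it is stated as a conjecture and left open, so there is nothing to compare your argument against. You yourself acknowledge this at the end, so your text is really a strategy sketch rather than a proof, and on that level the overall shape is sound: the upper bound $2n-7$ does follow from Lemma~\ref{Plemma.6.1} together with $\diam(\A_n)=2n-10$ for $n\geq13$, and the missing ingredient is indeed pushing the contraction count $l$ from $2$ to $3$ in the argument of Theorem~\ref{Ptheorem.6.2}.

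That said, your second route contains a concrete geometric error. If $a$ and $a'$ are adjacent boundary vertices of a convex polygon, the ear triangle at $a$ is $(a_{\mathrm{prev}},a,a')$ and the ear triangle at $a'$ is $(a,a',a'_{\mathrm{next}})$; these share the edge $aa'$ but have \emph{disjoint interiors}. Hence one cannot place $p$ in the interior of both, and neither $\alpha=\overline{pa}$ nor $\alpha'=\overline{pa'}$ can both be forced arcs of every triangulation of $P^\mstar$ in the way you need. The claim ``two adjacent ears of a convex polygon overlap in a non-empty region'' is false for interiors, so the double-contraction idea as stated cannot get off the ground. Your first route, via an interleaving condition on the links of $\alpha$, is more promising but, as you say, hinges on finding extremal pairs in \cite{Pournin2014} with that extra structure, which is exactly the unresolved point.
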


We point out that we do not know whether it would be reasonable to state this conjecture for any placement of the puncture. We now focus on the geometry of the natural embedding of the associahedron graph inside the pointihedron graph. Some of the arguments in the proof of Theorem \ref{Ptheorem.6.2} can be used to show that this embedding is strongly convex when the puncture is placed within a triangle that shares at least two edges with the considered convex polygon. In fact, we can do better. A boundary quadrilateral of a convex polygon is a quadrilateral that shares at least three of its edges with this polygon. We have the following convexity result when the puncture is placed within the interior of the intersection of two boundary quadrilaterals as sketched in Fig. \ref{Pfigure.5.1}.

\begin{theorem}\label{thm:SCP}
Consider a convex $n$-gon $P$. Consider two boundary quadrilaterals $q^-$ and $q^+$ of $P$ that share exactly three vertices. For any placement of the puncture in the interior of $q^-\cap{q^+}$, the resulting punctured polygon $P^\mstar$ is such that the natural embedding
$$
\A_n \hookrightarrow {\overline{\mathcal F}}(P^{\mstar})
$$
is strongly convex.
\end{theorem}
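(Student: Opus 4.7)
The plan is to exhibit a retraction $\rho : {\overline{\mathcal F}}(P^{\mstar}) \to \A_n$ with three properties: it restricts to the identity on $\A_n$, it is $1$-Lipschitz on every edge of ${\overline{\mathcal F}}(P^{\mstar})$, and it collapses every insertion/deletion edge (so that the two endpoints of such an edge, one in $\A_n$ and one in $\FF(P^\mstar)$, share the same $\rho$-image). Given such a $\rho$, any path $T_0,\ldots,T_k$ in ${\overline{\mathcal F}}(P^{\mstar})$ between two vertices of $\A_n$ projects to a walk in $\A_n$ of length at most $k-m$, where $m$ counts the insertion/deletion edges used. A path that leaves $\A_n$ and returns must contain at least one insertion followed eventually by a deletion, so $m\geq 2$; its projection is then a strictly shorter walk in $\A_n$, preventing the original path from being a geodesic. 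This is exactly strong convexity.

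The step I expect to be the geometric heart of the argument is to identify an anchor vertex $a$ among the three shared vertices of $q^-$ and $q^+$. Writing $q^- = a_1a_2a_3a_4$ and $q^+ = a_2a_3a_4a_5$ so that these shared vertices are $a_2,a_3,a_4$, I would first prove that $pa$ appears as an arc in \emph{every} triangulation of $P^{\mstar}$ for a suitable choice of $a$. In the prototypical situation where $p$ sits in the sub-triangle $a_2a_3a_4$ of $q^- \cap q^+$ one takes $a=a_3$: any chord of $P$ that could cross $pa_3$ would have to enter the triangle $a_2a_3a_4$ through its only non-boundary edge $a_2a_4$, and by convexity such a chord must be incident to $a_3$ (the unique vertex on that side of $a_2a_4$), so it only shares the endpoint $a_3$ with $pa_3$ rather than crossing it transversally. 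No arc from $p$ crosses $pa_3$ either, since any two arcs emanating from $p$ meet only at $p$. Maximality then forces $pa_3 \in T^*$ for every $T^* \in \FF(P^\mstar)$. I expect a parallel pocket argument, with the appropriate shared vertex in the role of $a_3$, to handle the remaining positions of $p$ inside $q^- \cap q^+$.

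With the anchor $a$ fixed, I would define $\rho$ by setting $\rho(T)=T$ on $\A_n$ and, for $T^* \in \FF(P^\mstar)$, by removing all arcs of $T^*$ incident to $p$ and re-triangulating the resulting star polygon of $p$ as the fan centered at $a$, which is always a vertex of that polygon. The collapse property on an insertion edge $T \to T^* = T \cup \{px,py,pz\}$ is then immediate: the triangle $xyz$ of $T$ containing $p$ must contain $a$ as a vertex by the same pocket argument applied to $T$, so the puncture star of $T^*$ is the triangle $xyz$ itself, on which the fan from $a$ has no diagonals, giving $\rho(T^*)=T$.

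The hard part will be verifying that $\rho$ is $1$-Lipschitz on flips internal to $\FF(P^\mstar)$. Flips that exchange two arcs not incident to $p$ leave the puncture star and its fan from $a$ unchanged, so they descend verbatim to the same flip in $\A_n$. A flip replacing an arc $pv$ by a chord $v'v''$ inside the quadrilateral $pv'vv''$ will require a short case analysis on whether $a$ coincides with $v'$, $v''$, $v$, or none of them; I expect that in every case the old and new fans from $a$ differ by exactly one diagonal of the convex quadrilateral $av'vv''$, namely $av$ versus $v'v''$, producing a single $\A_n$-flip. This final step can be viewed as a global refinement of the single-arc contraction used in the proofs of Theorems \ref{Ptheorem.5.5} and \ref{Ptheorem.6.2}, applied simultaneously to all triangulations of $P^{\mstar}$ via the fixed arc $pa$.
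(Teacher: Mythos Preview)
Your retraction strategy is essentially the paper's contraction argument rephrased, and it works cleanly in what the paper calls the first case, when $p$ lies in the triangle $a_2a_3a_4$ on the three shared vertices. There the arc $pa_3$ really does belong to every triangulation of $P^{\mstar}$, your $\rho$ is well defined, and the collapse of insertion/deletion edges forces any geodesic to stay in $\A_n$.

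The gap is in your ``parallel pocket argument'' for the remaining placements. Recall that $pa_k$ lies in \emph{every} triangulation of $P^{\mstar}$ if and only if no chord of $P$ separates $p$ from $a_k$, which happens exactly when $p$ lies in the ear triangle $a_{k-1}a_ka_{k+1}$. So your anchor exists only when $p$ belongs to one of the three ears $a_1a_2a_3$, $a_2a_3a_4$, $a_3a_4a_5$. These ears do \emph{not} cover $q^-\cap q^+$: the fourth vertex $x=a_1a_4\cap a_2a_5$ of $q^-\cap q^+$ lies on the far side of $a_1a_3$ from $a_2$, on the far side of $a_2a_4$ from $a_3$, and on the far side of $a_3a_5$ from $a_4$, hence outside all three ears. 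For any $p$ near $x$ the arcs $pa_2$, $pa_3$, $pa_4$ are crossed respectively by $a_1a_3$, $a_2a_4$, $a_3a_5$, so no anchor exists and your $\rho$ is undefined on the triangulations of $P^{\mstar}$ that contain the offending chord.

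The paper confronts exactly this obstruction. It keeps $a=a_3$ throughout and observes that the \emph{only} chord of $P$ that can cross $pa_3$ is $\beta=a_2a_4$; whenever $\beta$ is present, so are $pa_2$ and $pa_4$, so $\beta$ can be flipped to $pa_3$ inside the convex quadrilateral $a_2\,p\,a_4\,a_3$. The contraction is then defined as ``flip $\beta$ to $\alpha$ if necessary, then contract along $\alpha$''. Checking that this modified map is still $1$-Lipschitz and that insertion/deletion edges still collapse is no longer automatic: one has to track whether $\beta$ was present before and after each flip, handle separately the degenerate situation where $p$ lies on $\beta$, and finally invoke strong convexity inside $\A_n$ (Lemma~3 of \cite{SleatorTarjanThurston1988}) to rule out a geodesic that inserts the puncture with $\beta$ present and later removes it with $\beta$ present. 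Your sketch would need all of this extra machinery to cover the full statement.
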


\begin{proof} Assume the puncture $p$ is placed in the interior of $q^-\cap{q^+}$. Call $u$, $v$, and $w$ the three common vertices of $q^-$ and $q^+$ such that $v$ is between $u$ and $w$, as shown in Fig. \ref{Pfigure.5.1}. Denote by $\alpha$ the Euclidean arc connecting $v$ to $p$ and by $\beta$ the one connecting $u$ to $w$. Consider a sequence $T_0$, ..., $T_k$ of triangulations of $P$ or $P^\mstar$ such that $T_{i-1}$ and $T_i$ are related by a flip for $1\leq{i}\leq{k}$. We also assume that both $T_0$ and $T_k$ are triangulations of $P$ and that $k$ is their distance in ${\overline{\mathcal F}}(P^{\mstar})$. We will prove that $T_i$ is a triangulation of $P$ whenever $0\leq{i}\leq{k}$. To do so we will need to use a contraction operation along arc $\alpha$, very much like in the proof of Theorem \ref{Ptheorem.6.2}.

We review two cases.

First assume that $p$ belongs to the interior of the triangle with vertices $u$, $v$, and $w$.
\begin{figure}[b]
\begin{centering}
\includegraphics{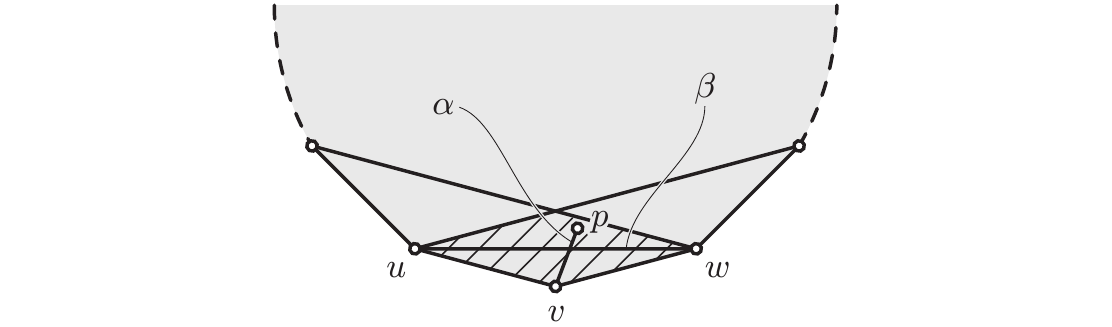}
\caption{The intersection (hatched) of two boundary quadrilaterals of a larger convex polygon. These quadrilaterals share three vertices $u$, $v$, and $w$. In this representation, the puncture $p$ is placed so that arcs $\alpha$ and $\beta$ are crossing.}\label{Pfigure.5.1}
\end{centering}
\end{figure}
Note that this triangle is actually a proper subset of $q^-\cap{q^+}$. Because of this particular placement of the puncture, the arcs that share their vertices with $P$ never cross $\alpha$. Therefore, $\alpha$ belongs to every triangulation of $P^\mstar$. In this case, the contraction along $\alpha$ can be defined as in the proof of Theorem \ref{Ptheorem.6.2}. By construction, the triangulations $T'_{i-1}$ and $T'_i$ obtained by contracting $T_{i-1}$ and $T_i$ along $\alpha$ are either identical or related by a flip. More precisely, $T_{i-1}$ and $T_i$ are identical when the flip that transforms them into one another affects a triangle incident to arc $\alpha$. Since $k$ is the least number of flips needed to transform $T_0$ into $T_k$, two consecutive triangulations in the sequence $T'_0$, ..., $T'_k$ cannot be identical because this sequence of triangulations would otherwise provide a sequence of less than $k$ flips that transforms $T_0$ into $T_k$. As any flip that introduces the puncture into a triangulation affects a triangle incident to arc $\alpha$, then such a flip cannot be the one that transforms $T_{i-1}$ into $T_i$ and the result follows.

Now assume that $p$ does not belong to the interior of the triangle with vertices $u$, $v$, and $w$. In this case, we define the contraction of $T_i$ along $\alpha$ as above when $T_i$ is a triangulation of $P$ or when $T_i$ contains $\alpha$. If $T_i$ is a triangulation of $P^\mstar$ that does not contain $\alpha$, this operation is still well defined because of our choice of placement of $p$. As $p$ lies in the interior of $q^-\cap{q^+}$, if $T_i$ does not contain $\alpha$, it must contain an arc that crosses $\alpha$. The arc $\beta$ is the only possible arc. Note that $T_i$ also contains the two arcs connecting $p$ to $u$ and to $w$ because any arc that crosses one of the two must also cross $\beta$ and so cannot belong to $T_i$. 

Therefore, when $T_i$ is a triangulation of $P^\mstar$ that contains $\beta$, we will define the contraction of $T_i$ along $\alpha$ as the operation that first performs the flip that replaces $\beta$ by $\alpha$ within $T_i$, and then contracts the resulting triangulation along $\alpha$.

By construction, the triangulations $T'_{i-1}$ and $T'_i$ obtained by contracting $T_{i-1}$ and $T_i$ along $\alpha$ are still either identical or related by a flip. In fact, when $p$ does not lie in $\beta$, $T'_{i-1}$ and $T'_i$ are identical exactly when the flip that transforms $T_{i-1}$ into $T_i$ affects a triangle incident to arc $\alpha$, as in the proof of Theorem \ref{Ptheorem.6.2}. When $p$ lies in $\beta$, this is still true, except if $T_{i-1}$ (resp. $T_i$) contains $\beta$ and the flip that transforms $T_{i-1}$ into $T_i$ introduces (resp. removes) the puncture. It will be important to remark that, if $T_i$ is obtained from $T_{i-1}$ by a flip that introduces the puncture, and neither triangulations contain $\beta$, then $T'_{i-1}$ and $T'_i$ are identical because this flip must introduce $\alpha$, and because we are not in the exception just mentioned.

As $k$ is the least number of flips required to transform $T_0$ into $T_k$, the sequence of triangulations $T'_0$, ..., $T'_k$ still provides a geodesic between $T_0$ and $T_k$. Hence $T'_{i-1}$ and $T'_i$ are never identical. Assume that some triangulation among $T_0$, ..., $T_k$ is a triangulation of $P^\mstar$. Let $j$ be the least index so that $T_j$ is a triangulation of $P^\mstar$ and $l$ be the index so that $T_i$ is a triangulation of $P^\mstar$ when $j\leq{i}<l$ and $T_l$ is a triangulation of $P$. As $p$ does not belong to the interior of the triangle with vertices $u$, $v$, and $w$, contracting a triangulation of $P^\mstar$ along $\alpha$ results in a triangulation that cannot contain $\beta$. This is the case for triangulations $T'_j$ to $T'_{l-1}$. As a consequence, $T_{j-1}$ and $T_l$ cannot both contain $\beta$. Otherwise, by Lemma 3 from \cite{SleatorTarjanThurston1988}, $T'_j$, ..., $T'_{l-1}$ would contain $\beta$. Assume without loss of generality that this flip is the one that introduces the puncture within $T_{j-1}$. In this case, neither $T_{j-1}$, nor $T_j$ contains $\beta$, and by the above remark, $T'_{j-1}$ is identical to $T'_j$, a contradiction.
\end{proof}

In light of the result above, we suggest the following conjecture. 
\begin{conjecture}\label{ConjPP}
Let $P^\mstar$ be a punctured convex $n$-gon. The natural embedding
$$
\A_n \hookrightarrow {\overline{\mathcal F}}(P^{\mstar})
$$
is strongly convex regardless of the placement of the puncture.
\end{conjecture}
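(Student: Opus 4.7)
The plan is to extend the projection argument from the proof of Theorem \ref{thm:SCP} to arbitrary puncture placements. That proof relied on the existence of a fixed arc $\alpha = (v, p)$ from the puncture to a vertex of $P$ that could be made to lie in every relevant triangulation of $\FF(P^\mstar)$; contracting along $\alpha$ yielded a $1$-Lipschitz map from $\overline{\FF}(P^\mstar)$ onto $\A_n$ that is the identity on $\A_n$. For an arbitrary puncture placement, no single arc incident to $p$ need belong to all triangulations of $P^\mstar$, so the projection must be chosen adaptively for each triangulation.

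The strategy is by contradiction: assume $T_0, T_1, \dots, T_k$ is a geodesic in $\overline{\FF}(P^\mstar)$ between $T_0, T_k \in \A_n$ passing through some $T_i \in \FF(P^\mstar)$. For $T \in \FF(P^\mstar)$, one would define $\pi(T) \in \A_n$ by choosing a canonical arc $\alpha(T)$ incident to $p$ in $T$ via some combinatorial rule (for instance the lexicographically smallest arc, or the one most clockwise from a fixed direction), and then contracting $T$ along $\alpha(T)$. The proof would reduce to showing that $\pi$ is $1$-Lipschitz and that the induced path $\pi(T_0), \dots, \pi(T_k)$ is strictly shorter than $k$, contradicting the assumption that the original path is a geodesic.

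The verification would split into cases according to the flip between $T_{i-1}$ and $T_i$: (a) the chosen arc $\alpha(T)$ is the same for both and the flip does not touch an incident triangle; (b) the chosen arc changes; (c) the flip inserts or removes the puncture. Cases (a) and, with some care, (c) can be handled in the spirit of the proof of Theorem \ref{thm:SCP}, with puncture-insertion/removal flips collapsing consecutive triangulations under $\pi$. The essential difficulty lies in case (b): when $\alpha(T_{i-1}) \neq \alpha(T_i)$, there is no a priori reason why the contractions along different arcs should produce triangulations of $P$ at distance at most one in $\A_n$; they may differ by many flips at once.

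Overcoming this obstacle is the main challenge. It likely requires either a more subtle projection --- perhaps one that depends globally on the path rather than pointwise on each triangulation, or one that allows $\pi$ to perform a bounded number of ``reorganizing'' flips when the canonical arc changes --- or a genuinely different approach. A possible alternative route is to exploit the secondary polytope description of the pointihedron and attempt to show that $\A_n$ arises as the $1$-skeleton of a face whose vertex set is geodesically convex in the full $1$-skeleton; but such convexity is not automatic even for faces of simple polytopes, and proving it here appears to be where the real work lies.
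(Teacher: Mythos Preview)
The statement you are attempting to prove is labeled as a \emph{conjecture} in the paper, not a theorem; the paper contains no proof of it. Immediately after stating the conjecture, the authors write that the method used for Theorem~\ref{thm:SCP} ``is highly dependent on the placement of the puncture'' and that proving the conjecture ``will probably be required to use a different approach which relies less on the local combinatorics.'' So there is no paper proof to compare your proposal against.

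Your proposal is also not a proof: you correctly identify the strategy of an adaptive contraction $\pi$, but you explicitly flag case~(b) --- where the canonical arc $\alpha(T_{i-1})$ differs from $\alpha(T_i)$ --- as the place where the argument breaks down, and you do not resolve it. This is an honest assessment, and the obstacle is real. If the contraction arc changes across a single flip, the two contracted triangulations can differ on a neighbourhood of the puncture involving several arcs at once, so the map fails to be $1$-Lipschitz. No choice of ``lexicographically smallest'' or ``most clockwise'' rule repairs this in general, because a single flip can change which arc is extremal. Your alternative suggestion of letting $\pi$ depend on the whole path rather than pointwise is closer in spirit to what one might need, but as stated it is only a hope, not an argument. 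The secondary-polytope route you mention is also inconclusive: $\A_n$ is indeed a face of the pointihedron, but geodesic convexity of faces in polytope graphs is false in general, so that observation alone buys nothing.

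In short: the paper leaves this open, and your write-up is a reasonable discussion of why the obvious extension of the Theorem~\ref{thm:SCP} argument fails, but it is not a proof and should not be presented as one.
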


The method we use to prove Theorem \ref{thm:SCP} is highly dependent on the placement of the puncture. In order to prove Conjecture \ref{ConjPP}, it will probably be required to use a different approach which relies less on the local combinatorics. 

In addition to $\A_n$, the other natural embedded subgraph of ${\overline{\mathcal F}}(P^{\mstar})$ is $\mathcal{F}(P^{\mstar})$ and we could ask about the geometry of its embedding. We shall show that the situation is very different by considering the two triangulations of a heptagon shown in Fig. \ref{Pfigure.5.2}. Using these, we'll show that the natural embedding $\mathcal{F}(P^{\mstar})\hookrightarrow {\overline{\mathcal F}}(P^{\mstar})$ is generally not strongly convex.

\begin{theorem}
Consider a convex $n$-gon $P$ where $n\geq7$. One can find a placement of the puncture so that, for some triangulations $U$ and $V$ of the resulting punctured polygon $P^\mstar$, no geodesic between $U$ and $V$ is entirely contained in $\FF(P^\mstar)$.
\end{theorem}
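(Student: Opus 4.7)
The plan is to construct triangulations $U$ and $V$ of a punctured polygon $P^\mstar$ for which
\[ d_{\overline{\FF}(P^\mstar)}(U,V) < d_{\FF(P^\mstar)}(U,V), \]
so that every geodesic between $U$ and $V$ in $\overline{\FF}(P^\mstar)$ must exit $\FF(P^\mstar)$ and pass through at least one triangulation of $P$. I would first dispatch the heptagon case ($n=7$) using the two explicit triangulations of Figure \ref{Pfigure.5.2}, then lift to general $n \geq 7$ by gluing on a fixed triangulated region.

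For $n = 7$, let $U_0$ and $V_0$ denote the triangulations of $P$ obtained from $U$ and $V$ by the unique flip that removes the puncture $p$. The witnesses pictured are designed so that $p$ lies inside a triangle $t_U \in U_0$ and a triangle $t_V \in V_0$ whose vertex sets are disjoint (or share at most one vertex), while $U_0$ and $V_0$ themselves are very close in $\FF(P) \cong \A_7$. The shortcut
\[ U \longrightarrow U_0 \longrightarrow \cdots \longrightarrow V_0 \longrightarrow V \]
in $\overline{\FF}(P^\mstar)$, following any geodesic from $U_0$ to $V_0$ in $\FF(P)$, then yields the upper bound $d_{\overline{\FF}(P^\mstar)}(U,V) \leq d_{\FF(P)}(U_0,V_0) + 2$.

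The heart of the argument is the matching strict lower bound $d_{\FF(P^\mstar)}(U,V) > d_{\FF(P)}(U_0,V_0) + 2$, and this is the step I expect to be the main obstacle. I would prove it by tracking, along an arbitrary flip sequence $T_0 = U, T_1, \ldots, T_k = V$ in $\FF(P^\mstar)$, both the evolution of the star of $p$ and the underlying sequence of triangulations of $P$ obtained by contracting each $T_i$ along a carefully chosen arc incident to $p$, in the spirit of the proofs of Theorems \ref{Ptheorem.5.5} and \ref{Ptheorem.6.2}. Because the stars of $p$ in $U$ and $V$ are vertex-disjoint, every arc incident to $p$ must be replaced, and no single flip inside $\FF(P^\mstar)$ can simultaneously rearrange the star of $p$ and advance the underlying flip sequence in $\FF(P)$, whereas in $\overline{\FF}(P^\mstar)$ a single insertion/removal pair absorbs all star adjustments at once. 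Formalizing this accounting sharply enough to beat $d_{\FF(P)}(U_0,V_0) + 2$ will almost certainly require a short case analysis of how the arcs incident to $p$ can evolve under flips in the specific configuration of Figure \ref{Pfigure.5.2}.

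For the extension to $n > 7$, choose a chord $\gamma$ cutting $P$ into a heptagonal region $P_7$ containing $p$ (with the puncture placed as in the heptagon case) and a convex $(n-5)$-gon $P'$. Fix a triangulation $T'$ of $P'$, set $\mu = T' \cup \{\gamma\}$, and define $\hat U = U \cup \mu$ and $\hat V = V \cup \mu$. Corollary \ref{Pcorollary.5.2} gives
\[ d_{\FF(P^\mstar)}(\hat U, \hat V) = d_{\FF(P_7^\mstar)}(U,V), \]
while every flip of a path in $\overline{\FF}(P_7^\mstar)$ acts only inside $P_7$ (puncture insertions and removals included) and hence preserves $\mu$; such a path therefore lifts to a path in $\overline{\FF}(P^\mstar)$, giving $d_{\overline{\FF}(P^\mstar)}(\hat U, \hat V) \leq d_{\overline{\FF}(P_7^\mstar)}(U,V)$. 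Combining these two relations with the strict inequality established for the heptagon transfers the failure of strong convexity to $P^\mstar$.
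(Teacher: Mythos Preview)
Your overall architecture matches the paper's: establish a strict inequality $d_{\overline{\FF}(P^\mstar)}(U,V) < d_{\FF(P^\mstar)}(U,V)$ for the heptagon witnesses, then extend to $n>7$ by gluing on a fixed triangulated $(n-5)$-gon and invoking Corollary~\ref{Pcorollary.5.2}. Your upper bound via the shortcut $U\to U_0\to\cdots\to V_0\to V$ is exactly what the paper does (in the paper $U_0$ is a fan, so $d_{\FF(P)}(U_0,V_0)\le 4$ and the shortcut has length at most $6$).

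Where you diverge is the lower bound, and there you are making life much harder than necessary. The paper's argument is entirely elementary: the two triangulations in Fig.~\ref{Pfigure.5.2} differ in all $6$ interior arcs, so any path in $\FF(P^\mstar)$ has length at least $6$; moreover a direct inspection shows that \emph{no} flip performable in either $U$ or $V$ introduces an arc of the other triangulation, forcing length at least $7$. That is the whole lower bound. No contraction, no tracking of stars, no case analysis of how arcs incident to $p$ evolve.

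Your proposed contraction approach, besides being unfinished, has a genuine technical obstacle you have not addressed: the contraction arguments of Theorems~\ref{Ptheorem.5.5} and~\ref{Ptheorem.6.2} rely on a \emph{single} arc $\alpha$ incident to $p$ that is present in every $T_i$ along the path (guaranteed there either by Corollary~\ref{Pcorollary.5.2} or by placing $p$ in an ear). Here you explicitly want the stars of $p$ in $U$ and $V$ to be vertex-disjoint, so there is no common arc incident to $p$, and a ``carefully chosen arc'' that varies along the path does not yield a coherent contracted sequence in $\FF(P)$. You would need a new idea to make that work, whereas the arc-count plus first-flip check avoids the issue entirely.
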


\begin{proof}
We just need to prove that the result holds for the triangulations shown in Fig. \ref{Pfigure.5.2}.
\begin{figure}[b]
\begin{centering}
\includegraphics{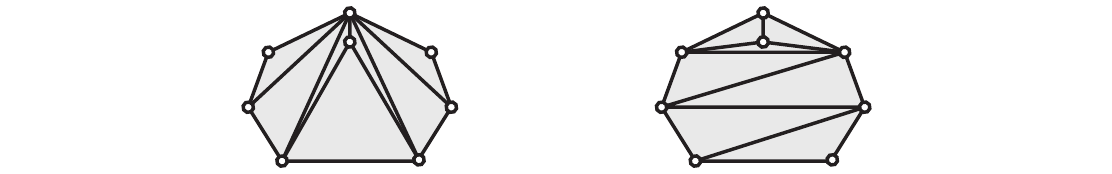}
\caption{Two triangulations of a punctured Euclidean heptagon.}\label{Pfigure.5.2}
\end{centering}
\end{figure}
In order to extend it to triangulations of a larger punctured $n$-gon $P^\mstar$, it suffices to glue the triangulations of Fig. \ref{Pfigure.5.2} along one boundary edge to a fixed triangulation of a $(n-5)$-gon. By Corollary \ref{Pcorollary.5.2}, the geodesics between the two resulting triangulations of $P^\mstar$ will keep the desired property.

First observe that the triangulations shown in Fig. \ref{Pfigure.5.2} differ by $6$ interior arcs. Hence a geodesic between then that never removes the puncture must have length at least $6$. In fact, the length of such a geodesic is at least $7$ because none of the flips that can be performed in either of these triangulations introduces an edge of the other triangulation. Now observe that flipping the puncture out of the triangulation shown on the left of Fig. \ref{Pfigure.5.2} results in a triangulation of the heptagon whose four interior edges are incident to the same vertex. Such a triangulation is distant by at most $4$ flips from any other triangulation of the heptagon. Hence, the two triangulations in Fig. \ref{Pfigure.5.2} are at most $6$ flips away if the removal and the insertion of the puncture are allowed.
\end{proof}

\addcontentsline{toc}{section}{References}
\bibliographystyle{amsplain}
\bibliography{Pointihedra}

{\em Addresses:}\\
Department of Mathematics, University of Fribourg, Switzerland \\
LIPN, Universit{\'e} Paris 13, Villetaneuse, France\\
{\em Emails:} \href{mailto:hugo.parlier@unifr.ch}{hugo.parlier@unifr.ch}, \href{mailto:lionel.pournin@univ-paris13.fr}{lionel.pournin@univ-paris13.fr}\\

\end{document}